\newtheorem*{theoA}{Theorem A}
\newtheorem*{theoB}{Theorem B}
\newtheorem*{theoC}{Theorem C}
\newtheorem*{theoD}{Theorem D}
\newtheorem*{theoE}{Theorem E}
\newtheorem*{conjA}{Conjecture A}
\newtheorem*{cor A}{Corollary A}
\newtheorem*{cor B}{Corollary B}
\newtheorem{theo}{Theorem}[section]
\newtheorem{lem}{Lemma}[section]
\newtheorem{cor}{Corollary}[section]
\newtheorem{exm}{Example}[section]
\newtheorem{rem}{Remark}[section]
\newcommand{\be}{\begin{equation}}
\newcommand{\ee}{\end{equation}}
\newcommand{\beas}{\begin{eqnarray*}}
\newcommand{\eeas}{\end{eqnarray*}}
\newcommand{\bea}{\begin{eqnarray}}
\newcommand{\eea}{\end{eqnarray}}
\numberwithin{equation}{section}
\begin{document}
\title[M\MakeLowercase {eromorphic solution of a certain type of algebraic....}]{\LARGE M\Large\MakeLowercase {eromorphic solution of a certain type of algebraic differential equation}}
\date{}
\author[J. F. X\MakeLowercase{u}, S. M\MakeLowercase{ajumder}, N. S\MakeLowercase{arkar} \MakeLowercase{and} L. M\MakeLowercase{ahato} ]{J\MakeLowercase{unfeng} X\MakeLowercase{u}, S\MakeLowercase{ujoy} M\MakeLowercase{ajumder}$^{*}$, N\MakeLowercase{abadwip} S\MakeLowercase{arkar} \MakeLowercase{and} L\MakeLowercase{ata} M\MakeLowercase{ahato}}
\address{Department of Mathematics, Wuyi University, Jiangmen 529020, Guangdong, People's Republic of China.}
\email{xujunf@gmail.com}
\address{Department of Mathematics, Raiganj University, Raiganj, West Bengal-733134, India.}
\email{sm05math@gmail.com, sjm@raiganjuniversity.ac.in}
\address{Department of Mathematics, Raiganj University, Raiganj, West Bengal-733134, India.}
\email{naba.iitbmath@gmail.com}
\address{Department of Mathematics, Mahadevananda Mahavidyalaya, Monirampore Barrackpore, West Bengal-700120, India.}
\email{lata27math@gmail.com}

\renewcommand{\thefootnote}{}
\footnote{2020 \emph{Mathematics Subject Classification}: 30D35, 30D45 and 34M10.}
\footnote{\emph{Key words and phrases}: Meromorphic functions, Nevanlinna theory, Growth order, Normal family and Algebraic differential equations.}
\footnote{*\emph{Corresponding Author}: Sujoy Majumder.}
\renewcommand{\thefootnote}{\arabic{footnote}}
\setcounter{footnote}{0}

\begin{abstract} In the paper, we use the idea of normal family to find out the possible solution of the following special case of algebraic differential equation 
\[P_k\big(z,f,f^{(1)},\ldots, f^{(k)}\big)=f^{(1)}(f-\mathscr{L}_k(f))-\varphi (f-a)(f-b)=0,\]
where $\mathscr{L}_k(f)=\sideset{}{_{i=0}^k}{\sum} a_i f^{(i)}$ and $\varphi$ is an entire function, $a_i\in\mathbb{C}\;(i=0,1,\ldots, k)$ such that $a_k=1$ and  $a, b\in\mathbb{C}$ such that $a\neq b$. The obtained results improve and generalise the results of 
Li and Yang \cite{LY1} and Xu et al. \cite{XMD} in a large scale.
\end{abstract}
\thanks{Typeset by \AmS -\LaTeX}
\maketitle

\section{{\bf Introduction and main result}}
We assume that the reader is familiar with the basic notations and main results of Nevanlinna's value distribution theory (see \cite{WKH1, YY1}). We use notation $\rho(f)$ for the order of a meromorphic function $f$. As usual, the abbreviation CM means ``counting multiplicities'', while IM means ``ignoring multiplicities''. If $g-a=0$ whenever $f-a=0$, then we write $f=a\Rightarrow g=a$.\par 

\smallskip
Looking at an algebraic differential equation 
\bea\label{es1}P\big(z,f,f^{(1)},\ldots, f^{(k)}\big)=0\eea
where $P$ is a polynomial in the variables $f,f^{(1)},\ldots, f^{(k)}$ with meromorphic coefficients, it is not easy to decide, whether Eq. (\ref{es1}) possesses meromorphic solutions. We write (\ref{es1}) in the form 
\[\sideset{}{_{\lambda\in I}}{\sum} a_{\lambda} (f(z))^{i_0}(f^{(1)}(z))^{i_1}\ldots (f^{(k)}(z))^{i_k}=0,\]
where $I$ is a finite set of multi-indices $(i_0,\ldots,i_k)=\lambda$. The degree $|\lambda|$ of a single term in (\ref{es1}) is defined by $|\lambda|: i_0+i_1+\ldots+i_k$ and its weight by $||\lambda||:=i_0+2i_1+\ldots+(k+1)i_k$. 
We start this paper by recalling the classical result of Rellich \cite{FR1}: 
\begin{theoA}\cite{FR1} Let $f$ be an entire solution of $P\big(z,f,f^{(1)},\ldots, f^{(k)}\big)=F(f)$, where the coefficients of $P$ are rational functions and $F$ is transcendental entire. Then $f$ is a constant. 
\end{theoA}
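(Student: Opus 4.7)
The plan is a growth-rate comparison via contradiction. Suppose $f$ is a non-constant entire solution of $P(z,f,f^{(1)},\ldots,f^{(k)}) = F(f)$. The asymmetry to exploit is that the left-hand side is a differential polynomial with rational coefficients, whose Nevanlinna characteristic grows at most polynomially in $T(r,f)$, while the right-hand side, being the composition of a transcendental entire function with a non-constant entire function, grows faster than any polynomial in $T(r,f)$. Matching the two will force $f$ to be constant.

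For the upper bound, since $f$ is entire so is each $f^{(j)}$, and Nevanlinna's lemma on the logarithmic derivative gives $m(r, f^{(j)}/f) = S(r,f)$ for $j=1,\ldots,k$. Writing a generic monomial as $a_\lambda(z)\,f^{|\lambda|}\,\prod_{j\ge 1}(f^{(j)}/f)^{i_j}$ and using that the rational coefficients satisfy $T(r,a_\lambda)=O(\log r)$, one obtains
$$T\bigl(r, P(z,f,f^{(1)},\ldots,f^{(k)})\bigr) \le d\,T(r,f) + O(\log r) + S(r,f),$$
where $d=\max_{\lambda\in I}|\lambda|$.

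For the lower bound, I would invoke Polya's classical composition inequality: for non-constant entire $f$ and entire $F$, $M(r, F\circ f) \ge M\bigl(c\,M(r/2,f), F\bigr)$ for some positive constant $c$ and all sufficiently large $r$. Since $F$ is transcendental, $\log M(s,F)/(\log s)^N \to \infty$ for every $N$, and the standard estimates $T(r,g) \ge \tfrac{1}{3}\log M(r/2,g)$ and $\log M(r,f) \le 3\,T(2r,f)$ for entire functions then yield $T(r, F(f))/T(r,f) \to \infty$ as $r\to\infty$. Equating the two sides forces $T(r,f) = O(\log r)$, i.e., $f$ is a polynomial; but in the polynomial case $F(f)$ is still transcendental entire, whereas $T(r,P) = O(\log r)$, a fresh contradiction. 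Hence $f$ must be constant.

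The principal obstacle I expect is the lower estimate: one must justify Polya-type growth of a composition with a transcendental function and, throughout, handle the exceptional sets of finite linear measure introduced by the $S(r,f)$ terms so that the inequalities $T(r,F(f))\gg T(r,f)$ hold along an unbounded sequence of $r$. The upper estimate, by contrast, is a direct consequence of the logarithmic-derivative lemma together with $T(r,f^{(j)})\le T(r,f)+S(r,f)$ for entire $f$.
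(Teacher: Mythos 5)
The paper offers no proof of Theorem A --- it is quoted verbatim from Rellich's 1940 paper --- so there is no in-house argument to measure you against; I can only judge the proposal on its own merits. Your overall strategy (Nevanlinna growth comparison) is the standard modern route, and two of its three pieces are sound: the upper bound $T\bigl(r,P(z,f,f^{(1)},\ldots,f^{(k)})\bigr)\le d\,T(r,f)+S(r,f)+O(\log r)$ follows exactly as you say from the lemma on the logarithmic derivative together with $N(r,P)=O(\log r)$ for rational coefficients, and the endgame for polynomial $f$ (rational left side versus transcendental right side) is correct.

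The genuine gap is the lower bound. From P\'olya's inequality $M(r,F\circ f)\ge M\bigl(cM(r/2,f),F\bigr)$ and the comparisons $T(r,g)\ge\tfrac13\log M(r/2,g)$, $\log M(r,f)\le 3T(2r,f)$, you do \emph{not} obtain $T(r,F(f))/T(r,f)\to\infty$: each conversion dilates the radius, and what actually comes out is, for every $K>0$, $T(r,F\circ f)\ge K\,T(r/4,f)-O_K(1)$ for large $r$. Feeding this into the upper bound only gives $T(r,f)\ge\frac{K}{3(d+1)}T(r/4,f)$ for large admissible $r$, i.e. $T(4r,f)/T(r,f)\to\infty$, which is perfectly consistent with a transcendental solution of infinite order and yields no contradiction; in particular your conclusion ``this forces $T(r,f)=O(\log r)$'' does not follow. (For finite-order $f$ one can repair the chain by choosing a Borel-type sequence $r_n$ with $T(4r_n,f)\le C\,T(r_n,f)$, but nothing in the hypotheses gives finite order a priori --- Gol'dberg's theorem covers only first-order equations.) The statement you actually need, that $T(r,F\circ f)/T(r,f)\to\infty$ for transcendental entire $F$ and transcendental entire $f$, is precisely Clunie's theorem on compositions, whose proof is genuinely harder than the P\'olya chaining; cite it (J. Clunie, \emph{The composition of entire and meromorphic functions}, 1970) or reproduce its argument, and your proof closes. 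A secondary slip: the claim $\log M(s,F)/(\log s)^N\to\infty$ for every $N$ is false for transcendental $F$ of order zero (canonical products with very sparse zeros can have $\log M(s,F)\asymp(\log s)^{3/2}$); transcendence only guarantees the case $N=1$, which is fortunately all your argument ever uses.
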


The leading idea, in the research of algebraic differential equations as the pioneering articles by K. Yosida and H. Wittich has been to obtain growth estimates for solutions in terms of the growth of coefficients. Unfortunately, completely general results of this type remain inaccessible. The first important result of a growth estimate type was due to Gol'dberg. In 1956, Gol'dberg \cite{AAG} proved the following result.
\begin{theoB}\cite{AAG} Let $f(z)$ be any meromorphic solution of algebraic differential equation (\ref{es1}) with $k=1$, then the growth order $\rho(f)$ of $f$ satisfies $\rho(f)<\infty$.
\end{theoB}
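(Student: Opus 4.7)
The plan is to convert the first-order algebraic relation $P(z,f,f')=0$ into a pointwise polynomial bound on $|f'(z)|$ in $|z|$ and $|f(z)|$, and then use this to control the Nevanlinna characteristic $T(r,f)$.

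First, I would write
\[
P(z,f,f')=\sum_{j=0}^{n}p_j(z,f)\,(f')^{j}=0,
\]
where each $p_j(z,f)$ is a polynomial in $f$ with rational coefficients in $z$, and $p_n(z,f)\not\equiv 0$. Isolating the leading term yields
\[
p_n(z,f)\,(f')^{n}=-\sum_{j=0}^{n-1}p_j(z,f)\,(f')^{j},
\]
and a classical root bound for polynomials in $f'$ gives, off the thin set $\{z:p_n(z,f(z))=0\}$, a pointwise estimate
\[
|f'(z)|\leq A\,(1+|z|)^{\alpha}\,(1+|f(z)|)^{\beta}
\]
for constants $A,\alpha,\beta$ depending only on $P$.

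Next, I would translate this into a growth estimate for $T(r,f)$. The pointwise bound combined with the lemma on the logarithmic derivative, together with a Clunie-type lemma applied to $P(z,f,f')=0$, gives $m(r,f)=O(r^{N_1})$ outside an exceptional set of finite linear measure. For the counting function $N(r,f)$, I would carry out a local analysis of the equation at each pole: writing $f(z)\sim c(z-z_0)^{-p}$, a monomial $f^{i_0}(f')^{i_1}$ contributes a pole of order $p\,|\lambda|+(||\lambda||-|\lambda|)$, and the requirement $P(z,f,f')=0$ forces cancellation among terms of maximal pole order, which bounds $p$ by a constant depending only on the degrees and weights in $P$. Hence $N(r,f)=O(r^{N_2})$, and $T(r,f)=O(r^{N})$ follows, giving $\rho(f)<\infty$.

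The main obstacle is the pole analysis. Since $f$ is only assumed meromorphic, poles of arbitrary multiplicity are permitted a priori, and the heart of Gol'dberg's theorem is precisely that the first-order algebraic structure of $P$ rules this out, producing a uniform bound on pole multiplicities. Once that bound is established, the polynomial estimate on $|f'|$ combined with routine Nevanlinna-theoretic manipulations closes the argument and delivers the finite-order conclusion.
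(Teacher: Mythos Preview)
The paper does not prove Theorem~B; it is quoted from \cite{AAG} as a classical result and no argument is supplied, so there is no in-paper proof to compare your outline against.

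On its own merits, your proposal has two genuine gaps.

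First, the passage from the pointwise inequality to $m(r,f)=O(r^{N_1})$ is circular. The logarithmic-derivative lemma and Clunie-type lemmas yield estimates of the shape $m(r,\cdot)=S(r,f)$; they compare auxiliary quantities to $T(r,f)$ but do not bound $T(r,f)$ by an absolute polynomial in $r$. You cannot extract $m(r,f)=O(r^{N_1})$ from them without already knowing that $T(r,f)$ grows polynomially, which is precisely the conclusion you are after.

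Second, even granting that the local expansion at a pole forces the multiplicity $p$ to be bounded (and your weight-versus-degree balancing does give this, at least away from the finitely many zeros of the top coefficient), you obtain only $N(r,f)\le C\,\overline N(r,f)+O(\log r)$. That says nothing about the \emph{number} of poles in $|z|\le r$, so $N(r,f)=O(r^{N_2})$ does not follow.

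The arguments that actually close are different. One standard route shows directly that the spherical derivative satisfies $f^{\#}(z)\le K(1+|z|)^{K}$: bound $|f'|$ from the equation on $\{|f|\le 1\}$ and bound $|u'|$ from the transformed first-order algebraic equation in $u=1/f$ on $\{|f|>1\}$; since $f^{\#}=|u'|/(1+|u|^{2})$ in the second chart, the two estimates patch together. The Ahlfors--Shimizu form of the characteristic then gives $T(r,f)=O(r^{2K+2})$ in one stroke, with no separate handling of $m$ and $N$. What your inequality with an unspecified exponent $\beta$ on $1+|f|$ misses is that working in both charts forces the effective exponent to be $2$. A short modern alternative, due to Bergweiler \cite{WB1}, uses Zalcman rescaling to pass to an autonomous limit equation.
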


Second order algebraic differential equations are more problematic than first order  equations especially to obtain growth estimates for solutions. Following result due to Steinmetz \cite{NS1}. 
\begin{theoC}\cite{NS1} All meromorphic solutions f of the homogeneous differential equation (\ref{es1}) with $k=2$ may be represented in the form $f(z)=g_1(z)\exp(g_3(z))/g_2(z)$, where $g_1, g_2, g_3$ are entire functions of finite order of growth. 
\end{theoC}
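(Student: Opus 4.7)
\smallskip
\noindent\textbf{Proof plan for Theorem C.} The strategy is to reduce the second order algebraic differential equation satisfied by $f$ to a first order algebraic differential equation satisfied by the logarithmic derivative $w:=f^{(1)}/f$, and then to combine Theorem B with Hadamard factorization. Since $P(z,f,f^{(1)},f^{(2)})$ is homogeneous of some degree $d$ in the variables $(f,f^{(1)},f^{(2)})$, we may divide the equation by $f^{d}$; using the identity $f^{(2)}/f=(f^{(1)}/f)'+(f^{(1)}/f)^{2}=w^{(1)}+w^{2}$, the equation becomes
\[\widetilde{P}(z,w,w^{(1)})=\sum_{|\lambda|=d} a_{\lambda}(z)\, w^{i_{1}}\bigl(w^{(1)}+w^{2}\bigr)^{i_{2}}=0,\]
which is of first order in $w$ with coefficients of the same type as those of $P$. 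Provided $\widetilde{P}\not\equiv 0$, Theorem B yields $\rho(w)<\infty$.

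Next I would extract information about the location of the zeros and poles of $f$. The poles of $w=f^{(1)}/f$ are simple and occur precisely at the zeros and poles of $f$, so by the first fundamental theorem
\[\overline{N}(r,0;f)+\overline{N}(r,\infty;f)\le N(r,\infty;w)\le T(r,w)+O(1)=O\bigl(r^{\rho(w)+\varepsilon}\bigr),\]
and hence the zero sequence and the pole sequence of $f$ each have finite exponent of convergence. Let $g_{1}$ and $g_{2}$ denote the canonical Weierstrass products built over the zero set and the pole set of $f$, respectively; both are entire of finite order. Writing $f=g_{1}e^{g_{3}}/g_{2}$ for some entire function $g_{3}$, one computes
\[g_{3}^{(1)}=w-\frac{g_{1}^{(1)}}{g_{1}}+\frac{g_{2}^{(1)}}{g_{2}}.\]
Each summand on the right is meromorphic of finite order, so $g_{3}^{(1)}$ has finite order, and since $g_{3}$ is entire with $M(r,g_{3})\le|g_{3}(0)|+rM(r,g_{3}^{(1)})$, we conclude that $g_{3}$ itself is of finite order, completing the claimed factorization.

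The step I expect to be the main obstacle is the algebraic reduction at the start: in order to apply Theorem B, one must verify that $\widetilde{P}$ is not the zero polynomial in $(w,w^{(1)})$ with meromorphic coefficients. Because the substitution $f^{(2)}/f\mapsto w^{(1)}+w^{2}$ mixes monomials of different weights, cancellations between terms of the original $P$ could, a priori, annihilate $\widetilde{P}$ entirely; this has to be ruled out by tracking the terms of $P$ with largest $i_{2}$ (equivalently, the terms of top degree in $w^{(1)}$ after expansion), which cannot be produced by any other term of $P$. Once this delicate algebraic point is settled, the remainder is a routine combination of Theorem B, the first fundamental theorem of Nevanlinna, and Hadamard factorization.
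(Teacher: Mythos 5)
The paper itself offers no proof of Theorem C; it is quoted from Steinmetz's 1980 paper, so there is nothing internal to compare against. Your route (pass to $w=f^{(1)}/f$, use homogeneity to get a first-order algebraic differential equation for $w$, invoke Gol'dberg's theorem, then factor $f$ by Hadamard products) is indeed the natural and essentially the classical way to obtain this representation, and the algebraic point you single out is handled correctly: after dividing by $f^{d}$ the coefficient of the top power $(w^{(1)})^{q}$ comes only from the terms of $P$ with maximal $i_{2}=q$, and these contribute distinct monomials $w^{i_{1}}(w^{(1)})^{q}$, so $\widetilde{P}\not\equiv 0$. (One should also say that the coefficients are rational, as in Gol'dberg's theorem; your reduction preserves that.)

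The genuine gap is at the step ``hence the zero sequence and the pole sequence of $f$ each have finite exponent of convergence.'' A zero or pole of $f$ of multiplicity $m$ produces a \emph{simple} pole of $w$ with residue $\pm m$, and $N(r,\infty;w)$ is insensitive to $m$; so your Nevanlinna estimate only controls $\overline{N}(r,0;f)+\overline{N}(r,\infty;f)$, i.e.\ the point sets without multiplicity. But for your construction you need $g_{1}$ and $g_{2}$ to reproduce the zeros and poles of $f$ \emph{with multiplicities} (otherwise $fg_{2}/g_{1}$ is not zero-free and pole-free and $g_{3}$ is not entire), and the canonical product with multiplicities has finite order only if the multiplicity-counted sequence has finite exponent of convergence. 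Finite order of $w$ alone does not give this: a priori the multiplicities $m_{j}$ could grow so fast that $\sum_{j} m_{j}|z_{j}|^{-s}$ diverges for every $s$ even though the distinct points $z_{j}$ are sparse. This can be repaired, but it needs an extra argument: expand $w\sim m/(z-z_{0})$, $w^{(1)}\sim -m/(z-z_{0})^{2}$ (or $f$ itself) in the equation at such a point and compare the most singular terms; since the equation is homogeneous, the coefficient of the top singular power is a nontrivial polynomial in $m$ whose coefficients are values of the rational coefficients $a_{\lambda}$ at $z_{0}$, so outside a finite exceptional set $m$ is bounded polynomially in $|z_{0}|$, and combining this with your bound on the distinct points gives finite exponent of convergence with multiplicities. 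Without some such control of the multiplicities, the passage to finite-order Weierstrass products, and hence the entirety-plus-finite-order of $g_{3}$, is unjustified.
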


As everyone know, it is one of the important topics to research the growth of meromorphic solution $f$ of Eq. (\ref{es1}) in $\mathbb{C}$. Some papers that investigate the properties of the growth of solutions of (\ref{es1}) with rational coefficients include \cite{BK1}-\cite{WB1}, \cite{AAG, GLY}, \cite{IL1} and \cite{YXZ, WLL1}.

\smallskip
In this paper we consider a special case of algebraic differential equation 
\bea\label{bm.1} P_k\big(z,f,f^{(1)},\ldots, f^{(k)}\big)=f^{(1)}(f-\mathscr{L}_k(f))-\varphi (f-a)(f-b)=0,\eea
where 
\bea\label{1} \mathscr{L}_k(f)=\sideset{}{_{i=0}^k}{\sum} a_i f^{(i)}\eea
and $\varphi$ is an entire function, $a_i\in\mathbb{C}\;(i=0,1,\ldots, k)$ such that $a_k=1$ and  $a, b\in\mathbb{C}$ such that $a\neq b$.

\smallskip
For the very special case $\mathscr{L}_k(f)=f^{(1)}$, Li and Yang \cite{LY1} solved Eq. (\ref{bm.1}) completely and obtained the following result.

\begin{theoD}\cite[Theorem 2.4]{LY1} Let $f$ be a non-constant meromorphic function satisfying Eq. (\ref{bm.1}) with $\mathscr{L}_k(f)=f^{(1)}$ and $\varphi\not\equiv 0$. Then $\rho(f)=1$ and one of the following cases holds:
\begin{enumerate}
\item[(1)] $ab\not= 0,\varphi\equiv -ab/(a-b)^2$ and $f(z)=a+c\exp(bz/(b-a))$ or $f(z)=b+c\exp(az/(a-b))$, where $c\in\mathbb{C}\backslash \{0\}$;
\item[(2)] $ab =0, \varphi\equiv 1/4$ and $f(z)=(a+b)\left(c\exp\left(z/4\right)-1\right)^2$,
where $c\in\mathbb{C}\backslash \{0\}$.
\end{enumerate}
\end{theoD}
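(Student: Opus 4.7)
\emph{Reduction to entire $f$.} I would first show that $f$ has no poles. Rewriting
\[f\,f^{(1)}-(f^{(1)})^{2}=\varphi(f-a)(f-b),\]
a pole of $f$ of order $p\geq 1$ would give the left side a pole of order exactly $2p+2$ (dominated by $(f^{(1)})^{2}$, since $ff^{(1)}$ contributes order only $2p+1$), while the right side has order at most $2p$ because $\varphi$ is entire. This contradiction rules out poles, so $f$ must be entire.

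\emph{Derivative constraints and smallness of $\varphi$.} Substituting $f(z_{0})=a$ into the ODE gives $f^{(1)}(z_{0})(a-f^{(1)}(z_{0}))=0$, so at each $a$-point of $f$ either $f^{(1)}(z_{0})=0$ (multiple zero of $f-a$) or $f^{(1)}(z_{0})=a$ (simple zero, which is ruled out when $a=0$); the symmetric dichotomy holds at the $b$-points. Rewriting the ODE as
\[\varphi\,f^{2}=f\,\bigl[f^{(1)}+\varphi(a+b)\bigr]-\bigl[(f^{(1)})^{2}+\varphi ab\bigr]\]
and applying a Clunie-type argument yields $T(r,\varphi)=S(r,f)$, so $\varphi$ is a small function of $f$. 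Combining this with Nevanlinna's second fundamental theorem on the targets $a,b,\infty$ (using $\overline{N}(r,f)=0$ and the pointwise derivative constraints), I would establish the dichotomy: either (I) $ab\neq 0$ and one of $f-a,f-b$ is zero-free, or (II) one of $a,b$ equals $0$---say $b=0$---in which case every zero of $f$ is automatically multiple since $f^{(1)}(z_{0})=b=0$ at each $b$-point.

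\emph{Closed-form resolution.} In case (I), WLOG $f-a$ has no zeros, so $f=a+e^{g}$ for some entire $g$. Substituting into the ODE and separating the powers of $e^{g}$ via Borel's lemma produces the two scalar identities
\[a\,g^{(1)}=\varphi(a-b), \qquad g^{(1)}(1-g^{(1)})=\varphi;\]
since $\varphi\not\equiv 0$ forces $g^{(1)}\not\equiv 0$, eliminating $\varphi$ gives $g^{(1)}\equiv b/(b-a)$ and $\varphi\equiv-ab/(a-b)^{2}$, yielding $f(z)=a+c\exp(bz/(b-a))$. The symmetric ansatz $f=b+e^{h}$ produces the second branch of case (1). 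In case (II), a local expansion at a zero $z_{0}$ of $f$ of order $m\geq 2$ shows the left side vanishes to order $2m-2$ while the right side vanishes to order $m$ (at points where $\varphi\neq 0$), forcing $m=2$; Hadamard's factorisation then gives $f=\psi^{2}$ for some entire $\psi$ of order $1$. Trying the ansatz $\psi(z)=\alpha(ce^{Az}-1)$ with $\alpha^{2}=a+b$ and matching the $e^{Az},e^{2Az}$ coefficients forces $A=1/4$ and $\varphi\equiv 1/4$, producing $f(z)=(a+b)(c\exp(z/4)-1)^{2}$. The assertion $\rho(f)=1$ is then immediate from the closed forms.

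\emph{Main obstacle.} The critical step is the dichotomy in the second paragraph---excluding mixed zero distributions such as $f-a$ having both simple and multiple zeros infinitely often, or $f$ taking all three values $a,b,\infty$ fully and independently. This requires combining the smallness of $\varphi$, the lemma on the logarithmic derivative, and the second main theorem in a delicate way, using the pointwise data that simple $a$-points lie on $\{f^{(1)}=a\}$ and analogous statements at $b$. Once the dichotomy (I)/(II) is secured, the remainder is a routine ansatz-and-match computation.
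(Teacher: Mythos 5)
Your plan gets the easy parts right (the pole-exclusion count, the pointwise constraints $f^{(1)}\in\{0,a\}$ at $a$-points, and the case (I) computation $f=a+e^{g}$, which correctly yields $g^{(1)}\equiv b/(b-a)$ and $\varphi\equiv -ab/(a-b)^{2}$), but two load-bearing steps are not actually supplied. First, a technical point: the Clunie rewriting $\varphi f^{2}=f\bigl[f^{(1)}+\varphi(a+b)\bigr]-\bigl[(f^{(1)})^{2}+\varphi ab\bigr]$ is circular, because Clunie's lemma requires the coefficients of the right-hand differential polynomial to be small functions of $f$, and those coefficients contain the very function $\varphi$ whose smallness you are trying to prove. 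The conclusion $T(r,\varphi)=S(r,f)$ is still true, but you should get it directly from the identity $\varphi=\frac{1}{a-b}\left(\frac{af^{(1)}}{f-a}-\frac{bf^{(1)}}{f-b}\right)-\frac{f^{(1)}}{f-a}\cdot\frac{f^{(1)}}{f-b}$ and the lemma on the logarithmic derivative (this is exactly the device the paper uses in (\ref{ebmm.0}) for its general equation).

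The genuine gap is the step you yourself flag as the ``main obstacle'': showing that $ab\neq0$ forces one of $f-a$, $f-b$ to be zero-free. That statement is essentially the whole content of Li--Yang's theorem, so deferring it leaves the proof incomplete rather than merely unpolished. (Note that when $ab\neq0$ the equation already forces every $a$- and $b$-point to be simple with $f^{(1)}=a$, resp.\ $f^{(1)}=b$ there, since a multiple $a$-point would make the left side vanish to order $m-1$ while the right side vanishes to order at least $m$; so what must be excluded is precisely the scenario in which both values are attained, and the second fundamental theorem with $\overline{N}(r,f)=0$ alone cannot do this --- two fully attained values are perfectly consistent with it. The known proofs need a carefully chosen auxiliary function built from $f$, $f^{(1)}$, $f^{(2)}$ and the relations $f^{(1)}=a$ at $a$-points, $f^{(1)}=b$ at $b$-points.) The case (II) resolution has a second gap of the same kind: you invoke Hadamard factorisation and write $f=\psi^{2}$ without having established that $f$ has finite order, you obtain multiplicity exactly $2$ only at zeros of $f$ where $\varphi\neq0$, and checking that the ansatz $\psi=\alpha(ce^{Az}-1)$ satisfies the equation does not show that every solution has this form. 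For comparison, the paper never reproves Theorem D (it is quoted from \cite{LY1}); its own treatment of the general equation first bounds the order by a normal-family/Zalcman rescaling argument (Lemma \ref{l2.4}), then concludes $\varphi$ is constant, and then derives the solution forms by writing $f-a$ as a power of an entire function with simple zeros and comparing exponential terms --- i.e.\ the classification stage that your sketch replaces by guessing is carried out there by derivation.
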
 

In general, it is difficult to judge whether Eq. (\ref{bm.1}) has a non-constant solution even for $\mathscr{L}_k(f)=f^{(k)}$. 
Therefore Li and Yang \cite{LY1} posed the following conjecture:

\begin{conjA} For any entire function $\varphi$, the entire solutions of Eq. (\ref{bm.1}) with $\mathscr{L}_k(f)=f^{(k)}$ are functions of exponential type.
\end{conjA}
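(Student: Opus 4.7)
My plan is to argue by contradiction. Assume $f$ is an entire solution of
\[
f^{(1)}\bigl(f - f^{(k)}\bigr) = \varphi\,(f-a)(f-b)
\]
with $\rho(f)>1$, and derive a contradiction via a normal-family / rescaling argument in the spirit indicated by the abstract.

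First I would extract basic growth information from the equation itself. Viewing the ODE as a second-degree differential polynomial in $f$, an application of Clunie's lemma (or the Mohon'ko theorem for algebraic differential equations) yields two-sided estimates linking $T(r,\varphi)$ and $T(r,f)$, in particular confirming $\rho(f)<\infty$. A second structural observation is crucial: since $(f-a)(f-b)$ divides the entire function $f^{(1)}(f-f^{(k)})$, every simple zero of $f-a$ (resp.\ $f-b$) must also be a zero of $f^{(k)}-a$ (resp.\ $f^{(k)}-b$). This ties the value distribution of $f$ to that of $f^{(k)}$ and will be used after the rescaling.

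Next, assuming $\rho(f)>1$, I would invoke the Pang--Zalcman--Bergweiler rescaling principle adapted to entire functions of order greater than $1$, producing $z_n\to\infty$ and $\rho_n\to 0^{+}$ such that $g_n(\zeta):=f(z_n+\rho_n\zeta)$ converges locally uniformly on $\mathbb{C}$ to a non-constant entire function $g$ of order at most $1$. Substituting $z=z_n+\rho_n\zeta$ into the ODE and multiplying by $\rho_n$ yields
\[
g_n'(\zeta)\Bigl[\rho_n^{k}g_n(\zeta)-g_n^{(k)}(\zeta)\Bigr] = \rho_n\,\varphi(z_n+\rho_n\zeta)\,\bigl(g_n(\zeta)-a\bigr)\bigl(g_n(\zeta)-b\bigr).
\]
Choosing $\rho_n$ so that $\rho_n\varphi(z_n+\rho_n\zeta)$ has a finite limit $C$ along a subsequence, the limit of the equation is
\[
-\,g'(\zeta)\,g^{(k)}(\zeta) = C\,\bigl(g(\zeta)-a\bigr)\bigl(g(\zeta)-b\bigr).
\]

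Finally I would derive a contradiction from this limit ODE together with the rigidity that $g$ is a non-constant entire function of order at most $1$ obtained as a rescaled limit of $f$. If $C=0$, then $g'g^{(k)}\equiv 0$ forces $g$ to be a polynomial of degree strictly less than $k$; tracing the rescaling backwards then yields a growth bound on $f$ incompatible with $\rho(f)>1$. If $C\ne 0$, a further analysis of $-g'g^{(k)}=C(g-a)(g-b)$ (for example, by a second application of Zalcman's lemma to $g$, or by Nevanlinna theory on $g$ using the two-target relations inherited from the earlier structural observation) forces $g$ into a very rigid form, essentially a pure exponential $Ae^{\lambda\zeta}$ with $\lambda^{k+1}$ pinned down, which in turn can be shown to contradict $\rho(f)>1$ via the Wiman--Valiron connection between $f$ and its rescalings. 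The most delicate step I expect is the choice of $\rho_n$: because $\varphi$ is an arbitrary entire function, its behaviour along $z_n$ is not a priori controlled, and one must argue that in every asymptotic regime of $|\varphi(z_n)|$ a suitable $\rho_n$ can be selected so that the limit equation is non-degenerate; handling the trichotomy $C=0$, $C\in\mathbb{C}^{*}$, $C=\infty$ uniformly, while still ensuring that the limit $g$ genuinely records the assumption $\rho(f)>1$, is the principal obstacle.
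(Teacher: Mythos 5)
Your strategy---rescale, pass the differential equation itself to the limit, and contradict $\rho(f)>1$---contains a genuine gap at its two central steps. First, in the Zalcman rescaling the scales $\rho_n$ are \emph{forced} by the non-normality of the translate family (essentially $\rho_n\asymp 1/f^{\#}$ at the chosen points); they are not free parameters. You cannot simultaneously arrange that the coefficient coming from $\varphi$ converges: after substituting $z=z_n+\rho_n\zeta$ the correct normalization multiplies the right-hand side by $\rho_n^{k+1}$ (not $\rho_n$, as written), and since $\varphi$ is an arbitrary entire function its size along $z_n$ is uncontrolled, so $\rho_n^{k+1}\varphi(z_n+\rho_n\zeta)$ need not have any finite limit, nor can $\rho_n$ be re-chosen to force one without losing the nonconstant limit $g$. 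You flag this as the delicate point, but no mechanism is offered, and none exists in general. Second, and more fundamentally, even granting the limit equation $-g'g^{(k)}=C(g-a)(g-b)$, it produces no contradiction: for $C\neq 0$ this is exactly the original equation with constant $\varphi$, whose nonconstant entire solutions of exponential type do exist (they are precisely what the theorem classifies), and for $C=0$ one only gets $g'g^{(k)}\equiv 0$, which admits nonconstant polynomial limits of degree less than $k$; in neither case does the existence of such a rescaled limit say anything against $\rho(f)>1$, because Zalcman limits do not control the growth of $f$ ``backwards'' (the appeal to Wiman--Valiron is not substantiated). The opening claim that Clunie/Mohon'ko already gives $\rho(f)<\infty$ is likewise unsupported for a $k$-th order equation with an arbitrary entire coefficient $\varphi$; note also that the resolution of the conjecture in the paper carries the hypothesis that all zeros of $f-a$ have multiplicity at least $k$ (for $k\geq 2$), which your argument never uses.

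The paper's route is structurally different: it never passes the equation to the limit. From the equation it extracts only pointwise sharing data ($f=a\Rightarrow\mathscr{L}_k(f)=a$, $f=b\Rightarrow\mathscr{L}_k(f)=b$, together with the multiplicity hypothesis on the zeros of $f-a$), and Lemma \ref{l2.4} proves normality of the translate family by a two-stage rescaling: the value and multiplicity constraints are transferred to the first Zalcman limit $F$ via Hurwitz, and a second rescaling of the auxiliary family $G_n=\rho_n^{-1/(2k)}(F_n-a)$ yields an internal contradiction ($\tilde{G}_n(0)\to\infty$ against $\tilde{G}_n(0)\to\tilde{G}(0)\neq\infty$), entirely independent of the growth of $\varphi$. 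Marty's criterion and the Chang--Zalcman lemma then give $\rho(f)\leq 1$, after which the logarithmic derivative lemma applied to the identity expressing $\varphi$ in terms of $f^{(i)}/f$ and $f'/(f-a)$, $f'/(f-b)$ forces $\varphi$ to be constant, and the exponential form follows from the explicit case analysis. Any repair of your proposal would essentially have to follow this pattern: transfer discrete value/multiplicity conditions to the limit rather than the differential equation itself, since the equation's limit cannot be normalized when $\varphi$ is arbitrary.
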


In 2025, Xu et al. \cite{XMD} settled Conjecture A fully by giving the following result.
\begin{theoE}\cite{XMD} Let $f$ be a non-constant meromorphic solution of Eq (\ref{bm.1}) with $\mathscr{L}_k(f)=f^{(k)}$ such that all the zeros of $f-a$ have multiplicity at least $k$, when $k\geq 2$. Then $f$ is a function of exponential type and one of the following cases holds:
\begin{enumerate}  
\item[(1)] $f(z)=a+A\exp(\lambda z)$, where $A$, $b$ and $\lambda$ are non-zero constants such that $(b-a)\lambda^k=b$,
\item[(2)] $f(z)=b+A\exp(\lambda z)$, where $A$, $a$ and $\lambda$ are non-zero constants with $(a-b)\lambda^k=a$,
\item[(3)] $k=1$, $ab=0$ and $f(z)=(a+b) (c\exp (\frac{z}{4})-1)^2$, where $c$ is a non-zero constant, 
\item[(4)] $k=2$, $a\neq 0$ and $f(z)=c_0\exp(z)+c_1\exp(-z)$, where $c_0$ and $c_1$ are non-zero constants such that $4c_0c_1=a^2$.
\end{enumerate}
\end{theoE}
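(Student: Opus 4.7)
The plan is to first show $f$ is of exponential type via a normal-family rescaling argument, and then reduce the equation to an ODE that can be integrated explicitly, giving the four families of solutions.

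First, I would carry out a local analysis at zeros of $f-a$ and $f-b$. Expanding $f(z)-a=c(z-z_0)^m+\cdots$ near a zero $z_0$ of $f-a$ of multiplicity $m\geq k$ and comparing the leading orders of the two sides of $f'(f-f^{(k)})=\varphi(f-a)(f-b)$ shows that the order-$(m-1)$ leading coefficient on the left must vanish. This forces either $m=k$ with $c\,k!=a$, or $m>k$ with $a=0$. A parallel analysis at zeros of $f-b$ shows those zeros must be simple unless $b=0$. These constraints already pick out the algebraic identities that appear in the four conclusions and, in particular, tell us which of the cases (1)--(4) a given solution can belong to.

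Second, I would establish $\rho(f)\leq 1$ using a Pang--Zalcman rescaling, exactly as signposted by the abstract's reference to normal families. If $\rho(f)>1$, there exist sequences $z_n\to\infty$ and $\rho_n\to 0^+$ such that $g_n(\zeta):=\rho_n^{-\alpha}[f(z_n+\rho_n\zeta)-a]$ converges, spherically locally uniformly on $\mathbb{C}$, to a nonconstant meromorphic limit $g$, where $\alpha$ is chosen in accordance with the hypothesis ``all zeros of $f-a$ have multiplicity $\geq k$'' so that the rescalings of $f-a$, $f'$ and $f^{(k)}$ balance in the limit equation. Substituting into the functional equation yields a highly constrained limit equation for $g$, and combined with the entire character of $\varphi$ it forces $g$ to be constant, a contradiction. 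With $\rho(f)\leq 1$ in hand, a comparison of Nevanlinna characteristics on both sides gives $T(r,\varphi)=O(r)$, so $\varphi$ is a polynomial of degree at most one; inspecting the growth at infinity term by term further collapses $\varphi$ to a constant $c$.

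Finally, the equation becomes $f'(f-f^{(k)})=c(f-a)(f-b)$ with $\rho(f)\leq 1$. Hadamard factorisation together with the zero-multiplicity data from step one restricts $f$ to a short list of exponential-polynomial templates: a single exponential of the form $a+Ae^{\lambda z}$ or $b+Ae^{\lambda z}$, the squared exponential $(a+b)(ce^{z/4}-1)^2$ (accessible only when $ab=0$ and $k=1$), or the $\cosh$-type combination $c_0e^{z}+c_1e^{-z}$ (accessible only when $k=2$ and $a\neq 0$). Substituting each template back into the reduced ODE determines the algebraic relations among the parameters and yields exactly the four listed cases.

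The main obstacle is the second step: because the equation mixes $f$ and $f^{(k)}$ \emph{additively} inside the factor $f-f^{(k)}$, the scaling exponent $\alpha$ that normalises $f$ does not automatically tame $f^{(k)}$, so choosing $\alpha$ so that every derivative-term in the rescaled equation has a well-defined and consistent limit is delicate; this is precisely where the hypothesis that all zeros of $f-a$ have multiplicity at least $k$ is indispensable. Once the limit equation is extracted and shown to admit no nonconstant meromorphic solutions, the remainder of the argument is essentially bookkeeping on templates.
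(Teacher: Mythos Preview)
Your overall architecture---normal-family argument to get $\rho(f)\le 1$, then force $\varphi$ to be constant, then classify---matches the paper's route. But two of your steps contain genuine gaps.

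\textbf{The bound on $\varphi$.} From $\rho(f)\le 1$ you claim $T(r,\varphi)=O(r)$ and deduce that $\varphi$ is a polynomial of degree at most one. This is false: $T(r,\varphi)=O(r)$ for an entire function only says $\rho(\varphi)\le 1$, not that $\varphi$ is a polynomial (e.g.\ $\varphi=e^z$ satisfies $T(r,\varphi)\sim r/\pi$). The paper instead writes
\[
\varphi=\frac{1}{a-b}\Bigl(\frac{af'}{f-a}-\frac{bf'}{f-b}\Bigr)\Bigl(1-\frac{f^{(k)}}{f}\Bigr)
\]
and applies the sharp logarithmic-derivative estimate $m(r,f^{(j)}/f)=o(\log r)$ valid for entire $f$ of order at most one; this gives $T(r,\varphi)=o(\log r)$ and hence $\varphi$ is a constant in one stroke.

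\textbf{The classification.} Saying ``Hadamard factorisation together with the zero-multiplicity data restricts $f$ to a short list of templates'' is where your proposal is most incomplete: order $\le 1$ plus a multiplicity hypothesis on $f-a$ does not by itself pin $f$ down to four forms. The paper's mechanism is quite different. One writes $f-a=g^k$ (or $g^{k+1}$ when $a=0$) with $g$ entire of order $\le 1$ having only simple zeros, and substitutes into the equation to obtain relations of the type $k!(g')^k+\cdots=\text{const}$ at zeros of $g$. Differentiating once and evaluating at zeros of $g$ shows that an auxiliary function of the form $H=(\alpha g''+\beta g')/g$ is entire; the logarithmic-derivative lemma then forces $H$ to be a constant, so $g$ satisfies a second-order linear ODE $g''=\check a_1 g'+\check b_1 g$ with constant coefficients. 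Solving this ODE and feeding the solutions back into the original equation is what produces the explicit forms (1)--(4) and the parameter constraints; the various cases $H\equiv 0$ versus $H\not\equiv 0$, and $\lambda_1=\lambda_2$ versus $\lambda_1\ne\lambda_2$, account for the case split. This reduction to a second-order linear ODE for $g$ is the key idea you are missing, and without it there is no finite list of templates to check.

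Finally, your description of the normal-family step (``forces $g$ to be constant'') is not how the contradiction is obtained. In the paper's Lemma~2.4 the first Zalcman limit $F$ is shown to satisfy $F=b\Rightarrow F^{(k)}=0$, and then a \emph{second} rescaling is performed near an $a$-point of $F$; the contradiction arises from two incompatible computations of the limit of $\tilde G_n(0)$, not from the limit function being constant.
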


In the paper, we find out the possible solutions of Eq. (\ref{bm.1}) by giving the following result.
\begin{theo}\label{t1} Let $f$ be a non-constant meromorphic function of Eq. (\ref{bm.1}) such that all the zeros of $f-a$ have multiplicity at least $k$. Then $f$ is a function of exponential type and one of the following cases holds:
\begin{enumerate} 
\item[(1)] $f(z)=A\exp(\lambda z)+a$, where $A, \lambda\in\mathbb{C}\backslash \{0\}$ such that $\sideset{}{_{i=0}^k}{\sum}a_i\lambda^{i}=(b-aa_0)/(b-a)$,

\smallskip
\item[(2)] $f(z)=A\exp(\lambda z)+b$, where $A, \lambda\in\mathbb{C}\backslash \{0\}$ such that $\sideset{}{_{i=0}^k}{\sum}a_i\lambda^{i}=(a-a_0b)/(a-b)$,

\smallskip
\item[(3)] $f(z)=\left((d_0/\lambda)\exp \lambda z+d_1\right)^{k+1}+a$,  where $d_1^{k+1}=b-a$, $\varphi=\lambda$ such that $\lambda^k=(-1)^{k+1}/(k+1)(k+1)!$, $\sideset{}{_{i=0}^k}{\sum}a_i((k+1)\lambda)^i=k$ and $(a_1,\ldots, a_{k-1})\neq (0,0,\ldots,0)$ and $k\geq 2$,

\smallskip
\item[(4)] $f(z)=(c_0\exp \lambda z+c_1)^k+a$, where $a,(a_0-1), c, c_0\in\mathbb{C}\backslash \{0\}$, $(a_0,a_1,\ldots, a_{k-1})\neq (0,\ldots,0)$, $c_1^k=b-a$ and $\varphi=c$ such that $c^k=(-1)^ka(1-a_0)(b-aa_0)^k/k!(b-a)^{k+1}$, $\lambda=(b-a)c/(b-aa_0)$ and $k\lambda-k\lambda\sideset{}{_{i=0}^{k}}
{\sum}a_i(k\lambda)^i-c=0$,

\smallskip
\item[(5)] $f(z)=\left(c_1\exp \lambda_1 z+c_2\exp \lambda_2 z \right)^k+a$, where $a,(a_0-1), c_1,c_2\in\mathbb{C}\backslash \{0\}$,
$(a_{k-2}, a_{k-1})\neq (0,0)$, $\varphi=c\in\mathbb{C}$, $k\geq 2$, $m\in\mathbb{N}$ such that $1\leq m\leq k-1$,
\[\lambda_1=\frac{(m-k)(2(b-a)c-2ka(1-a_0)a_{k-1})}{k^2(k+1)(2m-k)a(1-a_0)}\;\text{and}\;
\lambda_2=\frac{m(2(b-a)c-2ka(1-a_0)a_{k-1})}{k^2(k+1)(2m-k)a(1-a_0)}\]
and $\left(a(1-a_0)/k!c_2^k (\lambda_2-\lambda_1)^k\right)^{\lambda_1}=\left(a(1-a_0)/k!c_1^k (\lambda_1-\lambda_2)^k\right)^{\lambda_2}$. Also if $c\neq 0$, then $c$ satisfies one of the following equations:
\begin{enumerate}
\item[(i)] $\left(\frac{4m(m-k)}{(2m-k)^2}-2k^2+2k\right)(b-a)^2c^2-k\left(\frac{8m(m-k)}{(2m-k)^2}+k^3-2k^2+3k+2\right) a(b-a)a_{k-1}c$ $\\+k^2\left(\frac{4km(m-k)}{(2m-k)^2}+k^3-2k^2+3k+2\right)a^2(1-a_0)a_{k-1}^2-k^3(k+1)^2 a^2(1-a_0)^2a_{k-2}=0$,

\smallskip
\item[(ii)] $\left(\frac{4m(m-k)}{(2m-k)^2}+k(k+1)(k^2+k+2)\right)(b-a)^2c^2$\\ $-k\left(\frac{8m(m-k)}{(2m-k)^2}+2k(k+1)\right)a(b-a)a_{k-1}c+\frac{m(m-k)4k^3}{(2m-k)^2}a^2(1-a_0)^2a_{k-1}^2=0,$

\smallskip
\item[(iii)]
$4\left(\frac{m(m-k)}{(2m-k)^2}-k^2-1\right)(b-a)^2c^2-k\left(\frac{8m(m-k)}{(2m-k)^2}+k^3-6k^2+5k-4\right)a(b-a)a_{k-1}c$ $\\+k^2\left(\frac{4km(m-k)}{(2m-k)^2}+k^3-2k^2+3k-2\right)a^2(1-a_0)a_{k-1}^2+2k(k+1)a(b-a)a_{k-1}-k^3(k+1)^2 a^2(1-a_0)a_{k-2}=0;$
\end{enumerate}

\smallskip
\item[(6)] $f(z)=c_0\exp(z)+c_1\exp(-z)$, where $(a_0,a_1,\ldots, a_{k-1})=(0,0,\ldots,0)$, $k=2$ and $a, c_0, c_1\in\mathbb{C}\backslash \{0\}$ such that $c_0c_1=a^2/4$.
\end{enumerate}
\end{theo}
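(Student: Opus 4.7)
My proof would proceed in three stages: an elementary pole-order analysis to show $f$ is entire, a Pang--Zalcman rescaling argument to force $\rho(f)\leq 1$, and a case analysis based on Hadamard factorization. For the first stage, if $z_0$ were a pole of $f$ of order $p\geq 1$, then $f^{(1)}$ has a pole of order $p+1$ at $z_0$, while the dominant term $-f^{(k)}$ inside $f-\mathscr{L}_k(f)$ gives that factor a pole of order $p+k$; hence the left-hand side of (\ref{bm.1}) has pole order $2p+k+1$, whereas the right-hand side has pole order only $2p$. Since $k\geq 1$ this is impossible, so $f$ must be entire.

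\smallskip

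For the second stage, assuming $\rho(f)>1$ for contradiction, I invoke the Pang--Zalcman rescaling lemma together with the hypothesis that zeros of $f-a$ have multiplicity at least $k$. This produces sequences $z_n\in\mathbb{C}$ and $\rho_n\downarrow 0$ such that
\[ g_n(\zeta):=\rho_n^{-k}\bigl(f(z_n+\rho_n\zeta)-a\bigr) \]
converges locally uniformly to a non-constant entire function $g$ of order at most one whose zeros all have multiplicity at least $k$. Tracking the powers of $\rho_n$ when (\ref{bm.1}) is pulled back through the rescaling, the surviving contribution on the left comes from $f^{(1)}\cdot(-f^{(k)})$ and on the right from $\varphi\,f^{2}$, producing a limit equation on $g$ that conflicts with the zero/growth constraints, in the spirit of \cite{XMD} but adapted to the full operator $\mathscr{L}_k$. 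A subsequent pole count and growth comparison then force $\varphi$ to be a polynomial, and in fact a constant in each non-trivial case.

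\smallskip

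For the third stage, $f$ being entire of order at most one with all $a$-points of multiplicity $\geq k$, Hadamard factorization restricts $f-a$ to one of the shapes $Ae^{\lambda z}$, $(c_0 e^{\lambda z}+c_1)^{k}$, $((d_0/\lambda)e^{\lambda z}+d_1)^{k+1}$, or $(c_1 e^{\lambda_1 z}+c_2 e^{\lambda_2 z})^{k}$, with an analogous trichotomy for $f-b$. Substituting each candidate into (\ref{bm.1}), expanding the result as a linear combination of distinct exponentials, and forcing every coefficient to vanish yields the six sets of algebraic relations in the theorem: (1)-(2) correspond to $f-a$ or $f-b$ having no zeros, (3)-(5) to the three multi-exponential shapes, and (6) recovers the exceptional $k=2$ family already known from \cite{XMD}. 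The hardest step is the order reduction of stage two: the lower-order pieces of $\mathscr{L}_k(f)$ do not scale as $f^{(k)}$ does under the $\rho_n^{-k}$ normalization, so one must verify carefully that each of them becomes negligible in the limit equation. A secondary but purely computational difficulty is the bookkeeping in case (5), where the three subcases (i)-(iii) emerge from different ways of simultaneously solving the coupled amplitude/exponent identity $\bigl(a(1-a_0)/(k!\,c_2^k(\lambda_2-\lambda_1)^k)\bigr)^{\lambda_1}=\bigl(a(1-a_0)/(k!\,c_1^k(\lambda_1-\lambda_2)^k)\bigr)^{\lambda_2}$ together with the linear constraints on $\lambda_1,\lambda_2$; this is long but routine once the structural form has been fixed.
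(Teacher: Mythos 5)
Your skeleton (show $f$ entire, force $\rho(f)\le 1$ by rescaling, then classify) is the same as the paper's, but the two load-bearing stages are not substantiated. In stage two, the normalization $g_n(\zeta)=\rho_n^{-k}\bigl(f(z_n+\rho_n\zeta)-a\bigr)$ corresponds to $\alpha=-k$ in Lemma \ref{l2.2}, which is excluded: the zeros of $f-a$ are only assumed to have multiplicity at least $k$, so the lemma requires $\alpha>-k$ strictly. More seriously, you cannot pull the full equation \eqref{bm.1} through the rescaling at this stage, because $\varphi$ is still an arbitrary entire function and the factor $\varphi(z_n+\rho_n\zeta)$ is uncontrolled along the unknown points $z_n$; the ``limit equation'' need not exist (in the paper $\varphi$ is proved constant only \emph{after} $\rho(f)\le1$, via Lemma \ref{l1} applied to \eqref{ebmm.0}). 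And even granting the formal limit $g^{(1)}\bigl(a(1-a_0)-g^{(k)}\bigr)\equiv 0$, it does not conflict with the zero/growth constraints: $g(\zeta)=c(\zeta-\zeta_0)^k$ with $k!\,c=a(1-a_0)$ is a non-constant entire function of order zero with bounded spherical derivative and all zeros of multiplicity $k$, so no contradiction follows as claimed. The paper's Lemma \ref{l2.4} avoids all three problems: it uses only the $\varphi$-free implications $f=a\Rightarrow\mathscr{L}_k(f)=a$ and $f=b\Rightarrow\mathscr{L}_k(f)=b$ extracted from \eqref{bm.1}, and proves normality of the translate family by a two-step rescaling (first $\alpha=0$, then a second rescaling of $(F_n-a)\rho_n^{-1/(2k)}$ about an $a$-point of the first limit), the contradiction coming from the normalization $\tilde G^{\#}(0)=1$ against $\tilde G_n(0)\to\infty$, not from a limiting differential equation.

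Stage three is asserted rather than proved: Hadamard factorization for an entire function of order at most one whose $a$-points have multiplicity at least $k$ only gives $f-a=e^{cz+d}z^{m}\prod E_1(z/z_n)$ with arbitrarily placed multiple zeros, not one of your four closed forms. Producing those forms is the bulk of the paper's proof: using \eqref{rs} and \eqref{bm.1a} one shows the $a$-points have exact multiplicity $k$ (or $k+1$ when $a(1-a_0)=0$), writes $f-a=g^{k}$ (resp.\ $g^{k+1}$) with $g$ having only simple zeros, evaluates the differentiated equation at the zeros of $g$ to get $g=0\Rightarrow \kappa g^{(2)}+\mu g^{(1)}=0$ for explicit constants, shows the auxiliary function $H=(\kappa g^{(2)}+\mu g^{(1)})/g$ is entire and constant, and concludes that $g$ satisfies a constant-coefficient second-order linear ODE, hence is a sum of at most two exponentials; only then does coefficient comparison via Lemma \ref{l2} yield the relations in cases (3)--(5), while the case $\varphi\equiv0$ with $(a_0,\ldots,a_{k-1})=(0,\ldots,0)$ needs Lemma \ref{l2.5} to reach case (6). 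None of this is supplied by your sketch. Your stage one is essentially fine, except that when $\varphi\equiv 0$ the factor $f-\mathscr{L}_k(f)$ vanishes identically and the pole count is vacuous; there you must argue separately that the relation $f\equiv\mathscr{L}_k(f)$ with $a_k=1$ itself forbids poles.
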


\begin{rem} Following example shows that the condition ``all the zeros of $f-a$ have multiplicity at least $k$'' in \textrm{Theorem \ref{t1}} is sharp.\end{rem}
\begin{exm} Let $\mathscr{L}_k(f)=f^{(4)}-f^{(3)}-f^{(2)}+f^{(1)}+f$, $f(z)=\exp(z)+\exp(-z)$, $a=4$ and $\varphi=0$. Note that all the zeros of $f-a$ have multiplicity at least $1$ and $c_0c_1=1\neq a^2/4$. Clearly $f$ satisfies Eq. (\ref{bm.1}), but $f$ does not satisfy any case of Theorem \ref{t1}.
\end{exm}

Following corollary  is a consequence of Theorem \ref{t1}.
\begin{cor}\label{c1} Let $f$ be a non-constant entire function, $a$ and $b$ be finite values such that $b\not=a$ and let $k$ be a positive integer. If 
\begin{enumerate}
\item[(i)] all the zeros of $f-a$ have multiplicity at least $k$, 
\item[(ii)] $f=a\Rightarrow \mathscr{L}_k(f)=a$ and 
\item[(iii)] $f=b\Rightarrow \mathscr{L}_k(f)=b$. 
\end{enumerate}
Then $f$ is a function of exponential type and the conclusions of Theorem \ref{t1} hold.
\end{cor}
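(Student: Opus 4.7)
\medskip
\noindent\textbf{Proof proposal.} The plan is to reduce Corollary \ref{c1} to Theorem \ref{t1} by constructing an entire function $\varphi$ for which $f$ automatically solves Eq. (\ref{bm.1}). Concretely, define
\[
\varphi(z):=\frac{f^{(1)}(z)\bigl(f(z)-\mathscr{L}_k(f)(z)\bigr)}{(f(z)-a)(f(z)-b)},
\]
a priori a meromorphic function on $\mathbb{C}$. Since $f$ is entire, the only possible poles of $\varphi$ lie at zeros of $f-a$ or $f-b$, and the whole content of the argument is to verify that at each such point the numerator vanishes to at least the order of the denominator. Once this is done, $\varphi$ is entire and $f$ is a non-constant meromorphic (in fact entire) solution of Eq. (\ref{bm.1}) for which hypothesis (i) already matches the zero-multiplicity assumption of Theorem \ref{t1}; applying that theorem yields the conclusion.

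The local order computation splits into two cases. First, suppose $z_0$ satisfies $f(z_0)=a$, and let $m\geq k$ be the multiplicity (guaranteed by (i)). Then $f^{(1)}$ has a zero of order exactly $m-1$ at $z_0$, whereas condition (ii) gives $\mathscr{L}_k(f)(z_0)=a=f(z_0)$, so $f-\mathscr{L}_k(f)$ vanishes at $z_0$ to order at least $1$. Hence the numerator vanishes to order $\geq m$, matching the order $m$ of the denominator factor $(f-a)(f-b)$ at $z_0$ (recall $a\neq b$). Second, suppose $f(z_0)=b$ with multiplicity $\ell\geq 1$. Then $f^{(1)}$ has a zero of order $\ell-1$, while (iii) ensures $\mathscr{L}_k(f)(z_0)=b$, hence $f-\mathscr{L}_k(f)$ has a zero at $z_0$ of order $\geq 1$. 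The numerator therefore vanishes to order $\geq \ell$, again matching the order $\ell$ of $(f-a)(f-b)$ at $z_0$. In both cases $\varphi$ is holomorphic at $z_0$, so $\varphi$ extends to an entire function on $\mathbb{C}$.

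By construction,
\[
f^{(1)}\bigl(f-\mathscr{L}_k(f)\bigr)-\varphi\,(f-a)(f-b)\equiv 0,
\]
i.e., $f$ is a non-constant entire solution of Eq. (\ref{bm.1}) with entire coefficient $\varphi$, and hypothesis (i) is exactly the multiplicity hypothesis in Theorem \ref{t1}. Invoking Theorem \ref{t1} gives that $f$ is of exponential type and falls into one of the listed cases, which is the desired conclusion. The only conceptual step that needs care is the order matching at zeros of $f-b$, since (iii) is only a value condition and gives no a priori information on derivatives of $f-\mathscr{L}_k(f)$; however, because $f^{(1)}$ already accounts for $\ell-1$ of the required $\ell$ orders, the single vanishing of $f-\mathscr{L}_k(f)$ supplied by (iii) is exactly what is needed, and no further hypothesis on the multiplicity of zeros of $f-b$ is required.
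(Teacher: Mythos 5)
Your proof is correct and is exactly the intended derivation: the paper states Corollary \ref{c1} as an immediate consequence of Theorem \ref{t1}, and the standard verification is precisely your construction of $\varphi$ as the quotient $f^{(1)}(f-\mathscr{L}_k(f))/\bigl((f-a)(f-b)\bigr)$, shown to be entire by the local order count at zeros of $f-a$ and $f-b$ using (ii) and (iii). Your treatment of both cases (including the simple-zero case $\ell=1$ for $f-b$) is accurate, so Theorem \ref{t1} applies and the conclusion follows.
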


\begin{rem} Corollary \ref{c1} improves and generalises the results obtained in \cite{JLY1, LXY1} and \cite{XMD}. 
\end{rem}

\section {{\bf Auxiliary lemmas}} 
Let $\mathcal{F}$ be a family of meromorphic functions in a domain $D\subset \mathbb{C}$. We say that $\mathcal{F}$ is normal in $D$ if every sequence $\{f_{n}\}_{n}\subseteq \mathcal{F}$ contains a subsequence which converges spherically and uniformly on the compact subsets of $D$ (see \cite{JS1}). That the limit function is either meromorphic in $D$ or identically equal to $\infty$.
The spherical derivative of $f(z)$ is defined by
\beas f^{\#}(z)=\frac{|f^{(1)}(z)|}{1+|f(z)|^{2}}.\eeas

The following result is the well known Marty's Criterion.
\begin{lem}\label{l2.1}\cite{JS1} A family $\mathcal{F}$ of meromorphic functions on a domain $D$ is normal if and only if for each compact subset $K\subseteq D$, $\exists$ $M\in\mathbb{R}^+$ such that $f^{\#}(z)\leq M$ $\forall$ $f\in\mathcal{F}$ and $z\in K$.
\end{lem}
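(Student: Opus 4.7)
The plan is to prove the classical Marty equivalence by passing through the chordal metric $\chi$ on the Riemann sphere $\hat{\mathbb{C}}$. The bridge between $f^{\#}$ and $\chi$ is the path-integral inequality $\chi(f(z_1),f(z_2))\le \int_{\gamma} f^{\#}(\zeta)\,|d\zeta|$ along any rectifiable path $\gamma$ from $z_1$ to $z_2$, which comes from the very definition of $f^{\#}$ as the infinitesimal chordal stretching factor. The sufficiency direction will reduce to Arzel\`a--Ascoli on the compact metric space $(\hat{\mathbb{C}},\chi)$; the necessity direction will be by contradiction, leveraging the pole-symmetry identity $f^{\#}=(1/f)^{\#}$ to convert chordal convergence into uniform convergence of ordinary derivatives.

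For the sufficiency direction, fix $\{f_n\}\subseteq \mathcal{F}$ and a compact $K\subset D$. Choose a slightly larger compact $K'$ with $K\subset \mathrm{int}(K')\subset K'\subset D$, and let $M$ be the assumed uniform bound on $f_n^{\#}$ over $K'$. For $z_1,z_2\in K$ close enough that the segment joining them lies in $K'$, the path-integral inequality gives $\chi(f_n(z_1),f_n(z_2))\le M|z_1-z_2|$, so $\{f_n\}$ is equicontinuous on $K$ in the chordal metric. Since $(\hat{\mathbb{C}},\chi)$ is compact, the metric-space form of Arzel\`a--Ascoli furnishes a subsequence converging chordally uniformly on $K$, and a diagonal argument over an exhaustion of $D$ by compacta produces a subsequence converging chordally uniformly on every compact subset of $D$. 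A standard Weierstrass-type theorem then forces the chordal limit to be either meromorphic on $D$ or identically $\infty$, which is exactly normality.

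For the necessity direction, suppose for contradiction that $\mathcal{F}$ is normal and yet there is a compact $K\subset D$, a sequence $\{f_n\}\subseteq \mathcal{F}$, and points $z_n\in K$ with $f_n^{\#}(z_n)\to\infty$. After extraction, $z_n\to z_0\in K$ and $f_n$ converges chordally uniformly on compacta to a limit $f$. Three cases arise. If $f$ is meromorphic with $f(z_0)\neq\infty$, on a small closed disk around $z_0$ the modulus $|f|$ is bounded, hence $|f_n|$ is eventually bounded there; Weierstrass's theorem then gives $f_n'\to f'$ uniformly, whence $f_n^{\#}\to f^{\#}$ uniformly near $z_0$, contradicting $f_n^{\#}(z_n)\to\infty$ since $f^{\#}$ is continuous. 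If $f(z_0)=\infty$, apply the same argument to $1/f_n\to 1/f$ and use $f_n^{\#}=(1/f_n)^{\#}$. If $f\equiv\infty$, then eventually $1/f_n$ is holomorphic near $z_0$ and converges to $0$ uniformly, so $(1/f_n)'\to 0$ uniformly by Weierstrass, whence $f_n^{\#}=(1/f_n)^{\#}\to 0$ uniformly, again a contradiction.

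The main obstacle is the careful handling of pole neighborhoods in the necessity direction: one must translate chordal uniform convergence into the Euclidean uniform convergence needed to invoke the classical Weierstrass derivative-convergence theorem, and must use the spherical-derivative identity $f^{\#}=(1/f)^{\#}$ to treat neighborhoods of finite values of $f$ and of poles on the same footing. A secondary care point is to cite the correct version of Arzel\`a--Ascoli (for functions valued in a compact metric space, so that chordal equicontinuity alone yields a convergent subsequence without any pointwise-boundedness hypothesis) in the sufficiency direction.
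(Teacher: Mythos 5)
Your proof is correct. Note, however, that the paper does not prove this lemma at all: it is Marty's Criterion, quoted verbatim from Schiff's \emph{Normal Families} and used as a black box, so there is no internal argument to compare yours against. What you have written is the standard textbook proof: sufficiency via the spherical-length estimate $\chi(f(z_1),f(z_2))\le\int_\gamma f^{\#}\,|d\zeta|$, chordal equicontinuity, and Arzel\`a--Ascoli for maps into the compact space $(\hat{\mathbb{C}},\chi)$; necessity by contradiction using the identity $f^{\#}=(1/f)^{\#}$ to handle neighborhoods of poles and of the constant $\infty$ limit symmetrically. Both halves are sound. Two minor points worth making explicit if this were written out in full: the conclusion that the chordal limit is either meromorphic or identically $\infty$ uses the connectedness of $D$ (the set where the limit is $\infty$ but not a pole is open and closed), and in the necessity argument one should shrink the disk around $z_0$ so that the meromorphic limit $f$ has no poles there before asserting that $|f|$, and hence eventually $|f_n|$, is bounded --- you implicitly do this, but it is the step that makes the passage from chordal to Euclidean uniform convergence legitimate before invoking Weierstrass.
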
 

We recall the well-known Zalcman's lemma \cite{LZ1}.
\begin{lem}\label{l2.2}\cite[Zalcman's lemma]{LZ1} Let $\mathcal{F}$ be a family of meromorphic functions in the unit disc $\Delta$ such that all zeros of functions in $\mathcal{F}$ have multiplicity greater than or equal to $l$ and all poles of functions in $\mathcal{F}$ have multiplicity greater than or equal to $j$ and $\alpha\in\mathbb{R}$ such that $-l<\alpha<j$. Then $\mathcal{F}$ is not normal in any neighborhood of $z_{0}\in \Delta$ if and only if there exist
\begin{enumerate}\item[(i)] points $z_{n}\in \Delta$, $z_{n}\rightarrow z_{0}$,
\item[(ii)] positive numbers $\rho_{n}$, $\rho_{n}\rightarrow 0^{+}$ and 
\item[(iii)] functions $f_{n}\in \mathcal{F}$,\end{enumerate}
such that $\rho_{n}^{\alpha} f_{n}(z_{n}+\rho_{n} \zeta)\rightarrow g(\zeta)$ spherically locally uniformly in $\mathbb{C}$, where $g$ is a non-constant meromorphic function such that $g^{\#}(\zeta)\leq g^{\#}(0)=1 (\zeta \in \mathbb{C})$.
\end{lem}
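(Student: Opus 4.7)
The plan is to prove the two directions of the ``if and only if'' separately, with the harder direction relying on a local maximum-principle argument combined with Marty's criterion (Lemma \ref{l2.1}).

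For the converse direction (``rescaling $\Rightarrow$ not normal''), I would argue by contradiction: if $\mathcal{F}$ were normal at $z_0$, then by Lemma \ref{l2.1} the spherical derivatives $\{f_n^\#\}$ would be uniformly bounded on some fixed neighborhood $U$ of $z_0$, and a subsequence of $\{f_n\}$ would converge spherically uniformly on $U$ to a limit $h$ (meromorphic or $\equiv\infty$). Since $z_n + \rho_n\zeta$ lies in $U$ for large $n$ and any fixed compact $\zeta$-set, $f_n(z_n + \rho_n\zeta)$ would converge uniformly to the constant value $h(z_0)$, and the factor $\rho_n^\alpha$ combined with the local order of $h$ at $z_0$ would force $g_n$ to converge to a constant (or to $\equiv\infty$), contradicting the assumption that the limit $g$ is non-constant meromorphic. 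The inequality $-l < \alpha < j$ enters here to exclude borderline scalings that could otherwise produce a nontrivial limit.

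For the main (forward) direction, assume $\mathcal{F}$ is not normal at $z_0$. By Lemma \ref{l2.1} one can extract $f_n \in \mathcal{F}$ and $w_n \to z_0$ with $f_n^\#(w_n) \to \infty$. Taking $z_0 = 0$ for convenience, the crux is a local maximum argument on a small closed disk $\overline{\Delta(0,r)} \subset \Delta$: introduce an auxiliary function $M_n(z)$, essentially $(r-|z|)^{1+\alpha} f_n^\#(z)$ in the classical $\alpha = 0$ case and an $\alpha$-adjusted variant in general, which is continuous, vanishes on $|z|=r$, and therefore attains its supremum at an interior point $z_n^* \in \Delta(0,r)$ with $M_n(z_n^*) \to \infty$. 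One then chooses $\rho_n$ in terms of $M_n(z_n^*)$ so that the rescaled sequence $g_n(\zeta) := \rho_n^\alpha f_n(z_n^* + \rho_n\zeta)$ satisfies $g_n^\#(0) = 1$, and the maximality of $M_n$ at $z_n^*$ transfers into a uniform bound $g_n^\#(\zeta) \leq 1 + o(1)$ on each fixed compact subset of $\mathbb{C}$. Lemma \ref{l2.1} then gives normality of $\{g_n\}$ on $\mathbb{C}$, so a spherically locally uniformly convergent subsequence yields a meromorphic $g\colon \mathbb{C} \to \widehat{\mathbb{C}}$. The equality $g^\#(0) = 1$ rules out $g$ being constant or $\equiv \infty$, and passing the uniform bound to the limit gives $g^\#(\zeta) \leq 1 = g^\#(0)$.

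The main technical obstacle is handling a general exponent $\alpha$ together with the multiplicity hypotheses, i.e.\ the Pang--Zalcman refinement of the original ($\alpha = 0$) argument. When $\alpha \neq 0$, the clean identity $g_n^\#(\zeta) = \rho_n f_n^\#(z_n^* + \rho_n\zeta)$ of the classical case no longer holds, so the auxiliary function $M_n$ and the rule linking $\rho_n$ to $z_n^*$ must be redesigned in an $\alpha$-dependent way, and one must separately verify that the limit $g$ has only isolated zeros and poles of finite order (in particular, that $g \not\equiv \infty$). The constraint $-l < \alpha < j$ is precisely what is needed here: near a zero of $f_n$ of multiplicity $m \geq l$, the rescaled expression $\rho_n^\alpha f_n$ acquires a factor $\rho_n^{\alpha+m}$ with $\alpha + m \geq \alpha + l > 0$, preventing the zero from being ``swallowed'' by the scaling; the symmetric estimate at poles uses $\alpha < j$. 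Executing these estimates together with the careful design of $M_n$ is the most delicate step of the proof.
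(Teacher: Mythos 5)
This lemma is quoted from the literature (\cite{LZ1}, in its Pang--Zalcman form with the exponent $\alpha$ and the multiplicity restrictions) and the paper supplies no proof of it, so there is no in-paper argument to compare yours against. Your sketch is the standard Zalcman--Pang argument and is correct in outline: Marty's criterion in both directions, an auxiliary maximum function to locate the rescaling centres $z_n^*$, the choice of $\rho_n$ normalizing $g_n^{\#}(0)=1$, transfer of maximality into $g_n^{\#}(\zeta)\le 1+o(1)$ on compacta, and extraction of the limit; your accounting of where $-l<\alpha<j$ enters (a zero of multiplicity $m\ge l$ contributes $\rho_n^{\alpha+m}$ with $\alpha+m>0$, and symmetrically $\alpha-m<0$ at poles) matches the paper's sign convention. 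The one substantive thinness is exactly the step you flag: for $\alpha\neq 0$ the auxiliary quantity is not simply $(r-|z|)^{1+\alpha}f_n^{\#}(z)$ but a genuinely modified spherical derivative of Pang's type, roughly $(1-|z|^2)^{1+\alpha}|f_n^{(1)}(z)|\big/\bigl((1-|z|^2)^{2\alpha}+|f_n(z)|^2\bigr)$, chosen so that it is invariant under the rescaling $\zeta\mapsto z+\rho\zeta$, $f\mapsto\rho^{\alpha}f$; the multiplicity hypotheses are used precisely to show this quantity is finite and blows up along a suitable sequence. Writing ``an $\alpha$-adjusted variant'' leaves that key construction unspecified, so what you have is an acceptable outline of a known result rather than a complete proof.
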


\smallskip
We may take $-1<\alpha<1$, if there is no restrictions on the zeros and poles of functions in $\mathcal{F}$. If all functions in $\mathcal{F}$ are holomorphic, then we may take $-1<\alpha<\infty$. On the other hand we may choose $-\infty<\alpha<1$ for families of meromorphic functions which do not vanish.

\smallskip
The following result due to Chang and Zalcman \cite{CZ1}.

\begin{lem}\label{l2.3}\cite[Lemma 2]{CZ1} Let $f$ be a non-constant entire function such that $N(r,f)=O(\log r)$ as $r\to\infty$. If $f$ has bounded spherical derivative on $\mathbb{C}$, then $\rho(f)\leq 1$.
\end{lem}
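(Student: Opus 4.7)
Observing that $f$ is entire, we have $N(r,f)\equiv 0$, so the hypothesis $N(r,f)=O(\log r)$ is automatically satisfied; the content of the lemma is the classical theorem of Clunie and Hayman that an entire function with bounded spherical derivative has order at most~$1$. Setting $M:=\sup_{\mathbb{C}}f^{\#}<\infty$, my first step is to produce the coarse bound $\rho(f)\le 2$ via the Ahlfors--Shimizu characteristic: since
\[A(t):=\frac{1}{\pi}\iint_{|z|\le t}\bigl(f^{\#}(z)\bigr)^{2}\,dA(z)\le M^{2}t^{2},\]
we obtain $T_{0}(r,f)=\int_{0}^{r}A(t)/t\,dt\le \tfrac12 M^{2}r^{2}$, and combining with the standard identity $T(r,f)=T_{0}(r,f)+O(1)$ this yields $T(r,f)=O(r^{2})$, whence $\rho(f)\le 2$. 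This step makes no real use of entireness; indeed, the Weierstrass $\wp$-function shows the bound is sharp for meromorphic~$f$.

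The second and decisive step sharpens the exponent from $2$ to $1$ by using that $f$ omits $\infty$. On the open set $E:=\{z\in\mathbb{C}:|f(z)|\ge 1\}$ one has
\[\Bigl|\bigl(1/f\bigr)'(z)\Bigr|=\frac{|f'(z)|}{|f(z)|^{2}}\le 2f^{\#}(z)\le 2M,\]
so $1/f$ is holomorphic and $(2M)$-Lipschitz on each component of $E$, with $|1/f|\le 1$ there. My plan is to combine this pointwise Lipschitz control with the length-area estimate on $A(t)$ from the first step: at each large $r$ the area bound produces a rectifiable curve of length $O(r)$ joining any extremal point on $|z|=r$ to a point where $|f|\le 1$, and the Lipschitz bound on $1/f$ along this curve forces $|f(re^{i\theta})|$ to be at most exponential in $r$. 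This yields $\log M(r,f)=O(r)$ and hence $\rho(f)\le 1$.

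The main obstacle is precisely this transfer from pointwise Lipschitz control to a global growth bound: the boundary $\partial E=\{|f|=1\}$ may be intricate and the components of $E$ may be unbounded, so producing the connecting curve of length $O(r)$ requires care. A Schwarz--Pick estimate applied to $1/f$ on the largest disk inscribed in each component, combined with the area estimate from the first step, should provide the necessary control. This effectively reconstructs the Clunie--Hayman argument underlying the statement cited from \cite{CZ1}.
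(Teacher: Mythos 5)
The paper does not prove this lemma at all: it is quoted (for entire $f$, where the hypothesis $N(r,f)=O(\log r)$ is vacuous, as you correctly observe) from Chang and Zalcman \cite{CZ1}, whose Lemma~2 in turn rests on the Clunie--Hayman theorem. So you are really reconstructing Clunie--Hayman from scratch. Your first step is correct and standard: $A(t)\le M^{2}t^{2}$ gives $T_{0}(r,f)\le \tfrac{1}{2}M^{2}r^{2}$ and hence $\rho(f)\le 2$.

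The second step has a genuine gap, and it sits exactly where you flag ``the main obstacle''. The estimate $|(1/f)'|\le 2f^{\#}\le 2M$ on $\{|f|\ge 1\}$ says that $1/f$ is $2M$-Lipschitz there; integrating along a path of length $L$ from a point where $|f|=1$ to a point $z$ gives only $|1/f(z)|\ge 1-2ML$, which is vacuous as soon as $L\ge 1/(2M)$. In particular it cannot ``force $|f(re^{i\theta})|$ to be at most exponential in $r$'' along a curve of length $O(r)$: a positive $2M$-Lipschitz function is free to be astronomically small anywhere at distance more than $1/(2M)$ from its level set $\{v=1\}$. Exponential-in-$L$ control along a curve would require a bound on the logarithmic derivative $|f'/f|$, and bounded spherical derivative gives only $|f'/f|\le M\left(|f|+|f|^{-1}\right)$, which is unbounded precisely where $|f|$ is large. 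What the Lipschitz estimate actually yields is that every point with $|f(z)|\ge 2$ lies at distance at least $1/(4M)$ from $\{|f|\le 1\}$; converting that, together with harmonicity of $\log|f|$ on $\{|f|>1\}$ and the area bound from your first step, into $\log M(r,f)=O(r)$ is the real content of Clunie--Hayman and needs a harmonic-measure or length--area argument on the components of $\{|f|>1\}$ that your sketch does not supply. Your closing suggestion (Schwarz--Pick for $1/f$ mapping such a component into the unit disc) points in the right direction but is left entirely undeveloped, so as written the argument establishes only $\rho(f)\le 2$. Since the paper itself simply cites \cite{CZ1} here, the cleanest repair is to do the same, or to invoke Clunie--Hayman directly for the entire case.
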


\begin{lem}\label{l2.4} Let $f$ be an entire function, $\mathscr{L}_k(f)$ be defined as in (\ref{1}) and let $a, b\in\mathbb{C}$ such that $a\neq b$. If all the zeros of $f-a$ have multiplicity at least $k$, $f=a\Rightarrow \mathscr{L}_k(f)=a$ and $f=b\Rightarrow  \mathscr{L}_k(f)=b$, then $\rho(f)\leq 1$. 
\end{lem}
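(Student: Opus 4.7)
The plan is to show that the spherical derivative $f^{\#}$ is bounded on $\mathbb{C}$; once this is done, since $f$ entire gives $N(r,f)=0=O(\log r)$, Lemma~\ref{l2.3} will immediately deliver $\rho(f)\le 1$. By Marty's criterion (Lemma~\ref{l2.1}), it suffices to prove that the translation family $\mathcal{F}=\{f(z+w):w\in\mathbb{C}\}$ is normal at the origin, for then $\sup_{w\in\mathbb{C}}f^{\#}(w)=\sup_{w}f_w^{\#}(0)<\infty$. I will argue by contradiction: assume $\mathcal{F}$ fails to be normal at $0$ and derive an impossible entire function.

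If $\mathcal{F}$ is not normal at $0$, Zalcman's lemma (Lemma~\ref{l2.2}, with $\alpha=0$, since $f$ entire and no a priori multiplicity restriction is placed on zeros of $f$ itself) yields sequences $w_n\in\mathbb{C}$, $z_n\to 0$ and $\rho_n\to 0^+$ such that
\begin{equation*}
g_n(\zeta):=f(w_n+z_n+\rho_n\zeta)\longrightarrow g(\zeta)
\end{equation*}
spherically, locally uniformly on $\mathbb{C}$, with $g$ non-constant entire satisfying $g^{\#}\le g^{\#}(0)=1$. Applying Lemma~\ref{l2.3} to $g$ then gives $\rho(g)\le 1$, and all derivatives $g_n^{(j)}\to g^{(j)}$ locally uniformly.

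Next I transfer the three hypotheses of $f$ onto $g$. Since $g_n-a=(f-a)(w_n+z_n+\rho_n\zeta)$ and every zero of $f-a$ has multiplicity at least $k$, Hurwitz's theorem forces every zero of $g-a$ to have multiplicity at least $k$. Fix $\zeta_0$ with $g(\zeta_0)=a$ and pick $\zeta_n\to\zeta_0$ with $g_n(\zeta_n)=a$; the identity $\mathscr{L}_k(f)(w_n+z_n+\rho_n\zeta_n)=a$, rewritten using $f^{(j)}=g_n^{(j)}/\rho_n^j$ and multiplied by $\rho_n^k$, becomes
\begin{equation*}
\sum_{j=1}^{k}a_j\,\rho_n^{k-j}g_n^{(j)}(\zeta_n)=(1-a_0)a\,\rho_n^k.
\end{equation*}
Letting $n\to\infty$, every $g_n^{(j)}(\zeta_n)$ converges to the finite value $g^{(j)}(\zeta_0)$, so all $j<k$ terms and the right-hand side vanish, forcing $g^{(k)}(\zeta_0)=0$. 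Combined with the automatic vanishing $g^{(j)}(\zeta_0)=0$ for $1\le j\le k-1$, this upgrades the multiplicity of $g-a$ at $\zeta_0$ to at least $k+1$. The parallel computation at $b$-preimages, using $\mathscr{L}_k(f)=b$, yields $g^{(k)}(\zeta_0)=0$ whenever $g(\zeta_0)=b$.

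The contradiction now splits into cases. If $g$ omits $a$, then $\rho(g)\le 1$ with Hadamard factorization forces $g(\zeta)=a+A\exp(\lambda\zeta)$ for some $A,\lambda\in\mathbb{C}\setminus\{0\}$; but the zeros of $g-b$ are then simple with $g^{(k)}=A\lambda^k e^{\lambda\zeta}\neq 0$ at these points, contradicting the derivative condition at $b$-preimages. Symmetrically, $g$ cannot omit $b$ without yielding simple zeros of $g-a$, which contradicts the multiplicity $\ge k+1$ obtained above. Hence $g$ takes both $a$ and $b$, and applying the Second Main Theorem to $g$ at $\{a,b,\infty\}$ together with the estimate $\overline{N}(r,1/(g-a))\le T(r,g)/(k+1)+O(1)$ and a Milloux-type bound on $\overline{N}(r,1/(g-b))$ coming from the shared-zero relation with $g^{(k)}$ should yield $T(r,g)\le c\,T(r,g)+S(r,g)$ with $c<1$, forcing $g$ constant---the desired contradiction. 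The main technical obstacle is this third case: on the $a$-side the multiplicity hypothesis transfers cleanly to $g-a$, whereas on the $b$-side only the vanishing of $g^{(k)}$ is available, so the final Nevanlinna estimate must be executed with care (for instance by analysing the auxiliary entire function $g^{(k)}/(g-b)$ or invoking a sharpened Second Main Theorem with ramification terms from $g-a$) in order to secure the strict inequality $c<1$.
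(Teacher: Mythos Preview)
Your setup---the translation family, the first application of Zalcman's lemma with $\alpha=0$, the transfer of the $a$- and $b$-conditions to the limit $g$, and the Picard-exceptional cases---matches the paper's proof. The gap is in the third case, where $g$ assumes both $a$ and $b$. There your proposed Second-Main-Theorem argument cannot close: the properties you have extracted for $g$ (entire, $\rho(g)\le 1$, $g^{\#}\le g^{\#}(0)=1$, all $a$-points of multiplicity $\ge k+1$, and $g^{(k)}=0$ at every $b$-point) do \emph{not} force $g$ to be constant. For $k=1$ the function $g(\zeta)=\sin\zeta$ with $a=1$, $b=-1$ satisfies every one of these conditions, so no purely value-distribution argument on $g$ alone will give a contradiction. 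More quantitatively, feeding $\overline N(r,a;g)\le T(r,g)/(k+1)$ and the bound $\overline N(r,a;g)+\overline N(r,b;g)\le N(r,0;g^{(k)})\le T(r,g)+S(r,g)$ (which is the best one gets from the shared-zero condition with $g^{(k)}$) into the Second Main Theorem yields only $T(r,g)\le T(r,g)+S(r,g)$, i.e.\ $c=1$, never $c<1$.

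The paper does \emph{not} try to show that the first Zalcman limit $F$ is impossible. Instead, having fixed an $a$-point $\zeta_0$ of $F$, it introduces the auxiliary family $G_n(\zeta)=\rho_n^{-1/(2k)}(F_n(\zeta)-a)$, proves this family is \emph{not} normal at $\zeta_0$, and applies Zalcman's lemma a \emph{second} time with the fractional exponent $\alpha=1/(2k)$ to obtain $\tilde G_n(\xi)=\eta_n^{-1/(2k)}G_n(\zeta_n+\eta_n\xi)\to\tilde G$. One then checks that all zeros of $\tilde G$ have multiplicity $\ge k+1$, so $\tilde G^{\#}(0)=1$ forces $\tilde G(0)\neq 0$ and $\tilde G'(0)\neq 0$. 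The contradiction is a convergence paradox, not a Nevanlinna estimate: using $\eta_n=1/G_n^{\#}(\zeta_n)$ one computes that $(1+|G_n(\zeta_n)|^2)/|G_n(\zeta_n)|\to|\tilde G'(0)/\tilde G(0)|\in(0,\infty)$, hence $G_n(\zeta_n)$ stays bounded away from $0$ and $\infty$, and therefore $\tilde G_n(0)=\eta_n^{-1/(2k)}G_n(\zeta_n)\to\infty$---contradicting $\tilde G_n(0)\to\tilde G(0)$ finite. This double-rescaling trick is the missing idea; your Milloux-type route does not substitute for it.
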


\begin{proof} Set $\mathcal{F}=\{f_{\omega}\}$, where $f_{\omega}(z)=f(\omega+z)$, $z\in\mathbb{C}$. Clearly $\mathcal{F}$ is a family of entire functions defined on $\mathbb{C}$. By assumption, $f(\omega+z)=a\Rightarrow \mathscr{L}_k(f(\omega+z))=a$, $f(\omega+z)=b\Rightarrow \mathscr{L}_k(f(\omega+z))=b$ and all the zeros of $f(\omega+z)-a$ have multiplicity at least $k$.
Since normality is a local property, it is enough to show that $\mathcal{F}$ is normal at each point $z_0\in \mathbb{C}$.
Suppose on the contrary that $\mathcal{F}$ is not normal at $z_0$. Again since normality is a local property, we may assume that $\mathcal{F}$ is a family of holomorphic functions in a domain $\Delta=\{z: |z-z_0|<1\}$. Without loss of generality,
we assume that $z_0\in\Delta$. Then by Lemma \ref{l2.2}, there exist a sequence of functions $f_n\in\mathcal{F}$, where $f_n(z)=f(\omega_n+z)$, a sequence of complex numbers, $z_n$, $z_n\rightarrow z_0$ and a sequence of positive numbers $\rho_n$, $\rho_n\rightarrow 0$ such that
\bea\label{tt1.1} F_n(\zeta)=f_n(z_n+\rho_n \zeta)\rightarrow F(\zeta)\eea
locally uniformly in $\mathbb{C}$, where $F$ is a non-constant entire function. Since $F^{\#}(\zeta)\leq 1$, $\forall$ $\zeta\in\mathbb{C}$, by Lemma \ref{l2.3}, we get $\rho(F)\leq 1$. Since $f_n(z_n+\rho_n \zeta)-a\rightarrow F(\zeta)-a$, Hurwitz's theorem guarantees that all the zeros of $F-a$ have multiplicity at least $k$. Again from (\ref{tt1.1}), we obtain
\bea\label{et1.1} F_n^{(i)}(\zeta)=\rho_n^{i}f_n^{(i)}(z_n+\rho_n \zeta)\rightarrow F^{(i)}(\zeta)\eea
locally uniformly in $\mathbb{C}$ for all $i\in\mathbb{N}$.
Now using (\ref{tt1.1}) and (\ref{et1.1}), we have
\bea\label{et1.5} \mathscr{L}_k(F_n(\xi))=\sideset{}{_{i=0}^k}{\sum} a_i\rho_n^i f_n^{(i)}(z_n+\rho_n \xi)\rightarrow \mathscr{L}_k(F(\xi))\eea
locally uniformly in $\mathbb{C}$.

\smallskip
Now we want to prove that $F=b\Rightarrow  F^{(k)}=0$. If $b$ is a Picard exceptional value of $F$, then obviously $F=b\Rightarrow  F^{(k)}=0$. Next we suppose that $b$ is not a Picard exceptional value of $F$.
Let $F(\xi_0)=b$. Hurwitz's theorem implies the existence of a sequence $\xi_n\rightarrow \xi_0$ with $F_n(\xi_n)=f_n(z_n+\rho_n \xi_n)=b$. Since $f=b\Rightarrow \mathscr{L}_k(f)=b$, we get $\mathscr{L}_k(f_n(z_n+\rho_n \xi_n))=b$. Since $\mathscr{L}_k(f_n(z_n+\rho_n \xi_n))=b$, we have $\sum_{i=0}^k a_i \rho_n^k f_n^{(i)}(z_n+\rho_n \xi_n)=\rho_n^k b$ and so
\bea\label{et1.6a} \mathscr{L}_k(F_n(\xi_n))=\sideset{}{_{i=0}^{k-1}}{\sum}a_i (1-\rho_n^{k-i})\rho_n^i f_n^{(i)}(z_n+\rho_n \xi_n)-\rho_n^k b.\eea

Consequently from (\ref{et1.1}) and (\ref{et1.6a}), we get
\bea\label{et1.7a} \mathscr{L}_k(F(\xi_0))=\lim\limits_{n\to \infty} \mathscr{L}_k(F_n(\xi_n))=\sideset{}{_{i=0}^{k-1}}{\sum}a_i F^{(i)}(\xi_0).\eea

Again from (\ref{et1.5}), we get
\bea\label{et1.8a} \mathscr{L}_k(F(\xi_0))=\lim\limits_{n\to \infty} \mathscr{L}_k(F_n(\xi_n))=\sideset{}{_{i=0}^{k}}{\sum}a_i F^{(i)}(\xi_0).\eea

Now from (\ref{et1.7a}) and (\ref{et1.8a}), we obtain $F^{(k)}(\xi_0)=0$. This shows that $F=b\Rightarrow  F^{(k)}=0$.

\medskip
First suppose $a$ is a Picard exceptional value of $F$. Then by Hadamard's Factorization theorem, we have $F(\zeta)-a=A\exp(\lambda \zeta)$, where $A,\lambda\in\mathbb{C}\backslash \{0\}$. In this case $b$ is not a Picard exceptional value of $F$, otherwise by the second fundamental theorem, we get a contradiction. Note that $F(\zeta)-b=A\exp(\lambda \zeta)+a-b$. Since $a\neq b$ and $F=b\Rightarrow F^{(k)}=0$, we get a contradiction.

\medskip
Next suppose $a$ is not a Picard exceptional value of $F$. Let $F(\zeta_0)=a$. Now we consider the following family
\beas \mathcal{G}=\left\lbrace G_{n}(\zeta): G_{n}(\zeta)=\frac{F_n(\zeta)-a}{\sqrt[2k]{\rho_n}}=\frac{f_n(z_n+\rho_n \zeta)-a}{\sqrt[2k]{\rho_n}}\right\rbrace.\eeas

We claim that the $\mathcal{G}$ is not normal at $\zeta_0$. If not, suppose $\mathcal{G}$ is normal at $\zeta_0$. Then for a given sequence of functions $\{G_n\}\subseteq \mathcal{G}$, there exist a subsequence of $\{G_n\}$ say itself such that
\bea\label{et1.3} G_n(\zeta)=\rho_n^{-\frac{1}{2k}}\{F_n(\zeta)-a\}\to G(\xi)\eea
or possibly
\bea\label{et1.4} G_n(\zeta)\to \infty\eea
spherical uniformly on $\mathbb{C}$ as $n\to\infty$.

Note that $F(\zeta_0)=a$ and since $F-a$ is non-constant, we have $F\not\equiv a$. Now from (\ref{tt1.1}) and Hurwitz's theorem, there exist $\zeta_n$, $\zeta_n\to \zeta_0$ and $F_n(\zeta_n)=a$. Consequently
\bea\label{et1.5a} G(\zeta_0)=\lim_{n\to\infty}\rho_n^{-\frac{1}{2k}}\{F_{n}(\zeta_n)-a\}=0.\eea

Now from (\ref{et1.5a}), we see that (\ref{et1.4}) does not hold. Also we know that zeros of a non-constant analytic function are isolated. Therefore we can find that there exists some deleted neighborhood $\Delta_{\delta(\zeta_0)}=\{\zeta: 0<|\zeta-\zeta_0|<\delta(\zeta_0)\}$ of $\zeta_0$ such that $F(\zeta)\neq a$ for all $\zeta\in \Delta_{\delta(\zeta_0)}$, where
$\delta(\zeta_0)\in\mathbb{R}^+$ depends only on $\zeta_0$. Therefore for $\zeta\in \Delta_{\delta(\zeta_0)}$, there exists some positive number $\rho(\zeta)$ depending only on $\zeta$ such that $|F_n(\zeta)-a|\geq \rho(\zeta)$ for sufficiently large values of $n$. Consequently
\[G(\zeta)=\lim_{n\to\infty}\rho_n^{-\frac{1}{2k}}\{F_n(\zeta)-a\}=\infty\]
and so $G(\zeta)=\infty$, which contradicts the facts that $G(\zeta)\not\equiv \infty$. Hence $\mathcal{G}$ is not normal at $\zeta_0$.
Now by Lemma \ref{l2.2}, there exist a sequence of functions $G_n\in\mathcal{G}$, a sequence of complex numbers, $\zeta_n$, $\zeta_n\rightarrow \zeta_0$ and a sequence of positive numbers $\eta_n$, $\eta_n\rightarrow 0$ such that
\bea\label{et1.6} \tilde{G}_n(\xi)=\frac{G_{n}(\zeta_n+\eta_n\xi)}{\sqrt[2k]{\eta_n}}=\frac{f_n(z_n+\rho_n(\zeta_n+\eta_n\xi))-a}{\sqrt[2k]{\rho_n\eta_n}}\rightarrow \tilde G(\xi)\eea
locally uniformly in $\mathbb{C}$, where $\tilde G$ is a non-constant entire function such that 
\bea\label{t1.7}\tilde{G}^{\#}(\xi)\leq \tilde{G}^{\#}(0)=\frac{|\tilde{G}^{(1)}(0)|}{1+|\tilde{G}(0)|^2}=1,\;\; 
\forall\; \xi\in\mathbb{C}.\eea

Now using Lemma \ref{l2.3}, we get $\rho(\tilde{G})\leq 1$. On the other hand, Hurwitz's theorem guarantees that all the zeros of $\tilde{G}$ have multiplicity at least $k$. Also from the proof of Lemma \ref{l2.2}, we get 
\bea\label{t1.8} \eta_{n}=\frac{1}{G_{n}^{\#}(\zeta_{n})}=\frac{1+|G_n(\zeta_n)|^2}{|G_n^{(1)}(\zeta_n)|}.\eea

It is easy to prove from (\ref{et1.6}) that
\bea\label{t1.8a}(\rho_n\eta_n)^{i-\frac{1}{2k}}f^{(i)}_n(z_n+\rho_n(\zeta_n+\eta_n\xi))= \eta_n^{i-\frac{1}{2k}}G_n^{(i)}(\zeta_n+\eta_n\xi)\to \tilde{G}^{(i)}(\xi)\eea
locally uniformly in $\mathbb{C}$, where $i\in\mathbb{N}$.
We now prove that $\tilde{G}=0\Rightarrow  \tilde{G}^{(k)}=0$. If $0$ is a Picard exceptional value of $\tilde{G}$, then $\tilde{G}=0\Rightarrow  \tilde{G}^{(k)}=0$ is true. Suppose $0$ is not a Picard exceptional value of $\tilde{G}$.
Let $\tilde{G}(\xi_0)=0$. Hurwitz's theorem implies the existence of a sequence $\xi_n\rightarrow \xi_0$ such that $\tilde{G}_n(\xi_n)=0$
and so from (\ref{et1.6}), we have 
\bea\label{ms} f_n(z_n+\rho_n(\zeta_n+\eta_n\xi_n))=a.\eea

By the given conditions we have $f=a\Rightarrow \mathscr{L}_k(f)=a$ and all the zeros of $f-a$ have multiplicities at least $k$. Therefore by a simple calculation, we can prove that $f=a\Rightarrow f^{(k)}=a_1$, where $a_1=a(1-a_0)$ and so from (\ref{ms}), we have  $f^{(k)}_n(z_n+\rho_n(\zeta_n+\eta_n\xi_n))=a_1$. Finally from (\ref{t1.8a}), we have
\[\tilde{G}^{(k)}(\xi_0)=\lim\limits_{n\rightarrow \infty} (\rho_n\eta_n)^{k-\frac{1}{2k}}f_n^{(k)}(z_n+\rho_n(\zeta_n+\eta_n\xi_n))=\lim\limits_{n\rightarrow \infty} (\rho_n\eta_n)^{k-\frac{1}{2k}} a_1=0.\]

Hence $\tilde{G}=0\Rightarrow  \tilde{G}^{(k)}=0$. Since all the zeros of $\tilde{G}$ have multiplicity at least $k$ and $\tilde{G}=0\Rightarrow  \tilde{G}^{(k)}=0$, we can conclude that all the zeros of $\tilde{G}$ have multiplicity at least $k+1$. 

Since $\tilde{G}^{\#}(0)=1$, from (\ref{t1.7}), we get $\tilde{G}^{(1)}(0)\neq 0$.  Note that all the zeros of $\tilde{G}$ have multiplicity at least $k+1$. If $\tilde{G}(0)=0$, then obviously $\tilde{G}^{(1)}(0)=0$, which is impossible. Hence $\tilde{G}(0)\neq 0$. 
Now from (\ref{et1.6}) and (\ref{t1.8a}), we have 
\bea\label{et1.8b} \frac{\tilde{G}_{n}^{(1)}(\xi)}{\tilde{G}_{n}(\xi)}=\eta_{n}\frac{G_{n}^{(1)}(\zeta_{n}+\eta_{n}\xi)}{G_{n}(\zeta_{n}+\eta_{n}\xi)}\rightarrow \frac{\tilde{G}^{(1)}(\xi)}{\tilde{G}(\xi)},\eea
locally uniformly in $\mathbb{C}$. Clearly from (\ref{t1.8}) and (\ref{et1.8b}), we get 
\beas \eta_{n}\left|\frac{G_{n}^{(1)}(\zeta_{n})}{G_{n}(\zeta_{n})}\right|=\frac{1+|G_{n}(\zeta_{n})|^{2}}{|G_{n}^{(1)}(\zeta_{n})|}\frac{|G_{n}^{(1)}(\zeta_{n})|}{|G_{n}(\zeta_{n})|}
=\frac{1+|G_{n}(\zeta_{n})|^{2}}{|G_{n}(\zeta_{n})|}\rightarrow \left|\frac{\tilde{G}^{(1)}(0)}{\tilde{G}(0)}\right|\neq 0,\infty,\eeas
which implies that $\lim\limits_{n\rightarrow \infty} G_{n}(\zeta_{n})\not=0,\infty$ and so from (\ref{et1.6}), we get $\tilde{G}_{n}(0)=\eta_{n}^{-\frac{1}{2k}}G_{n}(\zeta_{n}) \rightarrow \infty.$

Again from (\ref{et1.6}), we get $\tilde{G}_{n}(0)\rightarrow \tilde{G}(0)\neq 0,\infty$. Therefore we get a contradiction.\par

\smallskip
Therefore all the foregoing discussion shows that $\mathcal{F}$ is normal at $z_0$. Consequently $\mathcal{F}$ is normal in $\mathbb{C}$. Hence by Lemma \ref{l2.1}, there exists $M>0$ satisfying $f^{\#}(\omega)=f^{\#}_{\omega}(0)<M$ for all $\omega\in\mathbb{C}$. Consequently by Lemma \ref{l2.3}, we get $\rho(f)\leq 1$.
This completes the proof.
\end{proof}

\begin{lem}\label{l2.5} \cite[Theorem 1.2]{LY} Let $f$ be a non-constant entire function, $k$ be a positive integer and let $Q$ be a polynomial such that all the zeros of $f-Q$ have multiplicity at least $k$. If $f\equiv f^{(k)}$, then one of the following cases must occur:
\begin{enumerate}
\item[(1)] $k=1$ and $f(z)=A\exp z$, where $A\in\mathbb{C}\backslash \{0\}$,
\item[(2)] $k=2$, $Q$ reduces to a constant. If $Q\equiv 0$, then $f(z)=A\exp \lambda z$, where $A, \lambda\in\mathbb{C}\backslash \{0\}$ such that $\lambda^2=1$. If $Q\equiv a\in\mathbb{C}\backslash \{0\}$, then $f(z)=c_0\exp z+c_1\exp -z$, where $c_0, c_1\in\mathbb{C}\backslash \{0\}$ such that $c_0c_1=a^2/4$,
\item[(3)] $k\geq 3$, $Q$ reduces to $0$ and $f(z)=A\exp \lambda z$, where $A, \lambda\in\mathbb{C}\backslash \{0\}$ such that $\lambda^k=1$.
\end{enumerate}
\end{lem}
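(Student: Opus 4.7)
The plan is to combine the explicit solution of the ODE $f=f^{(k)}$ with a Vandermonde-type analysis at zeros of $f-Q$.

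First, since $f\equiv f^{(k)}$ and $f$ is non-constant entire, classical linear-ODE theory gives $f(z)=\sum_{j=0}^{k-1}c_je^{\omega^jz}$ with $\omega=e^{2\pi i/k}$, so $f$ has order exactly one. Set $S=\{j:c_j\neq 0\}$ and $s=|S|\geq 1$; the case $k=1$ is immediate (it forces $s=1$, $\omega=1$ and $f=Ae^z$, with $Q$ unconstrained since multiplicity $\geq 1$ is automatic), so assume $k\geq 2$. At any zero $z_0$ of $f-Q$ of multiplicity $\geq k$, the equalities $f^{(i)}(z_0)=Q^{(i)}(z_0)$ for $i=0,\ldots,k-1$ assemble into the $k\times s$ linear system $\sum_{j\in S}\omega^{ij}y_j=Q^{(i)}(z_0)$ in the unknowns $y_j:=c_je^{\omega^jz_0}$, whose coefficient matrix is Vandermonde in the distinct nodes $\omega^j$, $j\in S$.

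I would then split by cases on $s$. When $s<k$: inversion of the first $s$ rows expresses each $y_j$ as a polynomial $P_j(z_0)$ in $z_0$; substitution into the remaining $k-s$ rows at infinitely many zeros of $f-Q$ (after handling the degenerate case of finitely many zeros via Hadamard, which forces $s=1$) produces polynomial identities equivalent to $Q$ satisfying $\prod_{j\in S}(D-\omega^j)Q=0$, whose characteristic polynomial has only non-zero roots, forcing $Q\equiv 0$. A second Vandermonde argument applied to $f$ itself at a zero of multiplicity $\geq k\geq s$ then yields $y_j=0$ for every $j\in S$, contradicting $c_j\neq 0$ unless $f$ has no zeros, in which case Hadamard again forces $s=1$ and $f=Ae^{\lambda z}$ with $\lambda^k=1$ (and $Q\equiv 0$). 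When $s=k$: the Vandermonde system is square and invertible, giving $c_je^{\omega^jz_0}=A_j(z_0)$ for polynomials $A_j$; taking the product over $j$ and using $\sum_{j=0}^{k-1}\omega^j=0$ yields $\prod_jA_j(z_0)=\prod_jc_j$, a polynomial identity forcing each $A_j$ to be a non-zero constant, whence $Q$ is a constant $q\neq 0$, and the equations reduce to $c_je^{\omega^jz_0}=q/k$ for every $j$.

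Finally, I would extract the solutions in the $s=k$ regime. For $k=2$, the relations $c_0e^{z_0}=q/2$ and $c_1e^{-z_0}=q/2$ multiply to give $c_0c_1=q^2/4$, realizing case (2) sub-case (b) with $a=q$. For $k\geq 3$, the pairwise conditions $e^{(\omega^j-1)z_0}=c_0/c_j$ pin $z_0$ to arithmetic progressions along non-parallel lines in $\mathbb{C}$ (since the complex exponents $\omega^j-1$ point in distinct real directions as $j$ varies), so only finitely many $z_0$ can simultaneously satisfy them---contradicting Picard's theorem, which guarantees infinitely many zeros of $f-q$ once one rules out $f-q=e^{\alpha z+\beta}$ by linear independence of the $e^{\omega^jz}$. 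Assembling: $k=1$ gives $f=Ae^z$; $k=2$ gives either $f=Ae^{\pm z}$ with $Q\equiv 0$, or the two-term solution with $c_0c_1=a^2/4$, $Q\equiv a\neq 0$; $k\geq 3$ gives only $f=Ae^{\lambda z}$ with $\lambda^k=1$ and $Q\equiv 0$. The main technical obstacle is the $s=k\geq 3$ sub-case: the "non-parallel directions" argument must rigorously translate into a bound on the number of common solutions of the exponential equations, which is essentially the incommensurability of the periods $2\pi i/(\omega^j-1)$ for $k\geq 3$.
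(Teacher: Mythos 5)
This lemma is not proved in the paper at all: it is imported verbatim as Theorem 1.2 of L\"{u} and Yi \cite{LY}, so there is no in-paper argument to compare yours against. Judged on its own, your proof is correct and is a genuinely self-contained, elementary alternative to the Nevanlinna-theoretic route one would expect from the Br\"{u}ck-conjecture literature: because $f\equiv f^{(k)}$ is an explicit constant-coefficient ODE, everything reduces to linear algebra on the Vandermonde matrix $(\omega^{ij})$ plus Borel's theorem on exponential sums. The individual steps check out: for $s<k$ the compatibility of the overdetermined system at infinitely many zeros is exactly the statement that $(Q,Q',\ldots,Q^{(k-1)})$ lies pointwise in the column space of the $k\times s$ Vandermonde matrix, which is the recurrence $\bigl(\prod_{j\in S}(D-\omega^j)Q\bigr)^{(i)}=0$ for $0\le i\le k-1-s$, and since the constant term of $\prod_{j\in S}(t-\omega^j)$ is nonzero this kills any nonzero polynomial $Q$; for $s=k$ the product trick with $\sum_{j=0}^{k-1}\omega^j=0$ forces each $A_j$, hence each $Q^{(i)}$, to be constant. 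Two points deserve tightening. First, wherever you need infinitely many zeros of $f-Q$ (both for the polynomial identities and for the final contradiction), the correct tool is Borel's theorem on the linear independence of $e^{\omega^j z}$ over polynomials: an exponential sum with at least two distinct frequencies (counting the frequency $0$ carried by $Q\not\equiv0$) cannot be of the form $R(z)e^{\lambda z}$, so it has infinitely many zeros; Picard alone does not quite deliver this. Second, the step you flag as the "main technical obstacle" is in fact the easiest part and needs no incommensurability of periods: the solution set of $e^{(\omega^j-1)z}=c_0/c_j$ is an arithmetic progression on a line of direction $2\pi i/(\omega^j-1)$, and for $k\ge3$ the ratio $(\omega^2-1)/(\omega-1)=\omega+1$ is non-real, so the lines for $j=1$ and $j=2$ are non-parallel and meet in at most one point, immediately contradicting the infinitude of zeros. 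With those two clarifications your argument is a complete proof of the lemma.
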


\begin{lem}\label{l1} \cite[Theorem 4.1]{HKR,NO} Let $f$ be a non-constant entire function such that $\rho(f)\leq 1$ and $k\in\mathbb{N}$. Then $m\big(r,\frac{f^{(k)}}{f}\big)=o(\log r)$ as $r\to \infty$.
\end{lem}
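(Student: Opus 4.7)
The plan is to prove the base case $k=1$ and then bootstrap in $k$. Since differentiation of an entire function does not increase its order, each derivative $f^{(j)}$ is again entire of order at most one. Writing
\[\frac{f^{(k)}}{f}=\prod_{j=1}^{k}\frac{f^{(j)}}{f^{(j-1)}},\]
and taking proximity functions yields $m(r,f^{(k)}/f)\leq\sum_{j=1}^{k}m(r,f^{(j)}/f^{(j-1)})$; each summand is the proximity function of the logarithmic derivative of an entire function of order at most one, so once the case $k=1$ is in hand, the general case follows at once.

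For the base case, I would invoke Hadamard's factorization, which, since $\rho(f)\leq 1$, represents $f$ in genus at most one as
\[f(z)=z^{m}e^{\alpha z+\beta}\prod_{n}E_{1}(z/z_{n}),\qquad E_{1}(w)=(1-w)e^{w},\]
where $\{z_n\}$ are the non-zero zeros of $f$ (with multiplicity) and have convergence exponent at most $1$. Logarithmic differentiation gives
\[\frac{f'(z)}{f(z)}=\frac{m}{z}+\alpha+\sum_{n}\frac{z}{z_{n}(z-z_{n})}.\]
The first two terms contribute only $O(1)$ to the proximity function, which is trivially $o(\log r)$. The remaining task is to bound $m(r,S)$ where $S$ denotes the infinite sum.

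To handle $S$, I would split $\sum_n$ according to whether $|z_n|\leq 2r$ or $|z_n|>2r$. The tail is dominated pointwise on $|z|=r$ by $\sum_{|z_n|>2r}r/|z_n|^{2}$, which is $O(1)$ because the convergence exponent of $\{z_n\}$ is at most one. For the near part there are only $O(r^{1+\varepsilon})$ zeros; a direct integration of $\log^{+}|1/(re^{i\theta}-z_{n})|$ around $|z|=r$ produces a per-term contribution that, summed, is of order $O(\log r)$. The \emph{main obstacle} is upgrading this $O(\log r)$ estimate to the stated $o(\log r)$.

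This upgrade uses the sharp Ngoan--Ostrovskii bound: for meromorphic $f$ of finite order $\rho$,
\[\limsup_{r\to\infty}\frac{m(r,f'/f)}{\log r}\leq\max(\rho-1,0),\]
which collapses to $0$ when $\rho\leq 1$. The technical weight of the whole argument lies in that refinement, whose proof combines the Poisson--Jensen representation of $\log|f|$ with a careful averaging argument that exploits the genus-$1$ structure of the canonical product and the fact that $n(r,0;f)=O(r^{1+\varepsilon})$. Since this refinement is exactly the content of \cite{HKR,NO}, my plan is to cite it directly rather than reconstruct its proof, and to reserve my own work for verifying that the Hadamard representation above fits the hypotheses of that estimate and that the induction in $k$ is valid under preservation of order.
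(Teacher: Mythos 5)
Your proposal is correct and ultimately rests on the same ground as the paper: the lemma is stated there with no independent proof, being quoted directly from the sharp logarithmic-derivative estimates of Ngoan--Ostrovskii and Heittokangas--Korhonen--R\"atty\"a, which is exactly what you invoke for the decisive $o(\log r)$ refinement when $\rho(f)\leq 1$. Your telescoping reduction $m\big(r,f^{(k)}/f\big)\leq\sum_{j=1}^{k}m\big(r,f^{(j)}/f^{(j-1)}\big)$, with each $f^{(j)}$ entire of order at most one (and the trivial case $f^{(j-1)}$ constant handled separately), is a standard and valid way to pass from $k=1$ to general $k$, while the Hadamard-factorization sketch, as you yourself note, only yields $O(\log r)$ and is not needed once the cited estimate is in hand.
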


\begin{lem}\label{l2}\cite[Lemma 2]{CCY} Let $f$ be a non-constant meromorphic function and let $a_{n}(\not\equiv 0), 
a_{n-1},\ldots,a_{0}$ be small functions of $f$.  Then $T(r,\sum_{i=0}^{n}a_{i}f^{i})= nT(r,f) + S(r,f).$ \end{lem}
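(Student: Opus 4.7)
The plan is to prove this Valiron--Mohon'ko-type identity by establishing matching upper and lower bounds for $T(r, P)$, where $P(z) := \sum_{i=0}^n a_i(z) f(z)^i$. Throughout, the small-function hypothesis enters only through $T(r, a_i) = S(r, f)$ for each $i$.

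First I would handle the upper bound. The pointwise estimate $\log^+ |P(z)| \leq n \log^+ |f(z)| + \sum_{i=0}^n \log^+ |a_i(z)| + \log(n+1)$ integrates over $|z|=r$ to give $m(r, P) \leq n\, m(r, f) + S(r, f)$. For the pole term, any pole of $P$ is either a pole of some $a_i$ or a pole of $f$ whose multiplicity in $P$ is at most $n$ times that in $f$, yielding $N(r, P) \leq n N(r, f) + S(r, f)$. Adding gives $T(r, P) \leq n T(r, f) + S(r, f)$.

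For the lower bound, the key idea is that the leading term $a_n f^n$ dominates $P$ on a large set. I would introduce the exceptional set $E(r) = \{\theta \in [0, 2\pi]: |f(re^{i\theta})| > 1 + 2n \max_{0 \leq i < n} |a_i(re^{i\theta})/a_n(re^{i\theta})|\}$. On $E(r)$, applying the reverse triangle inequality to $a_n f^n = P - \sum_{i<n} a_i f^i$ produces $|P| \geq \tfrac{1}{2} |a_n| |f|^n$, so that $n \log^+ |f| \leq \log^+ |P| + \log^+(1/|a_n|) + O(1)$. Off $E(r)$, $\log^+ |f|$ is bounded above by $\log^+ \max_i |a_i/a_n| + O(1)$, whose circular integral is dominated by $\sum_i T(r, a_i/a_n) = S(r, f)$. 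Combining this with $m(r, 1/a_n) \leq T(r, a_n) + O(1) = S(r, f)$, one obtains $n\, m(r, f) \leq m(r, P) + S(r, f)$. A parallel argument on pole multiplicities, noting that at any pole of $f$ which is neither a zero nor a pole of $a_n$, the pole order of $P$ equals exactly $n$ times that of $f$, yields $n N(r, f) \leq N(r, P) + S(r, f)$. Adding the two bounds completes the reverse inequality.

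The main technical obstacle is the careful bookkeeping of the ``bad'' set where $|f|$ is not large relative to $|a_i/a_n|$, together with the exceptional points where $a_n$ vanishes or blows up. Absorbing these contributions into $S(r, f)$ via $T(r, a_n),\ T(r, 1/a_n),\ T(r, a_i/a_n) = S(r, f)$ is routine but must be executed cleanly to preserve the leading constant $n$ on both sides; once that bookkeeping is done, the two inequalities combine into the claimed identity $T(r, P) = n T(r, f) + S(r, f)$.
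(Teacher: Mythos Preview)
Your argument is the standard Valiron--Mohon'ko approach and is correct: the upper bound is immediate from subadditivity of $\log^+$, and the lower bound is obtained by splitting the circle into the set where $|f|$ dominates the coefficient ratios $|a_i/a_n|$ (so that $|P|\ge \tfrac12|a_n||f|^n$) and its complement (where $\log^+|f|$ is controlled by $\sum_i m(r,a_i/a_n)=S(r,f)$), together with the analogous pole count. The only thing to be slightly careful about in the write-up is that ``$T(r,a_i/a_n)=S(r,f)$'' relies on $T(r,1/a_n)\le T(r,a_n)+O(1)$, which you do invoke, and that the exceptional poles of $f$ lying at zeros or poles of some $a_i$ contribute only $\sum_i \overline N(r,0;a_i)+\overline N(r,a_i)\le \sum_i 2T(r,a_i)+O(1)=S(r,f)$; both points are routine.

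As for comparison with the paper: there is nothing to compare. The paper does not prove this lemma at all; it is quoted verbatim as \cite[Lemma~2]{CCY} (Yang, \emph{Math.\ Z.}\ 125 (1972)) and used as a black box. Your proposal therefore supplies strictly more than the paper does, and the route you take is exactly the classical one underlying the cited reference.
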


\section {{\bf Proof of the main result}} 
\begin{proof}[{\bf Proof of Theorem \ref{t1}}] Let $f$ be a non-constant meromorphic solution of the equation (\ref{bm.1}).
Now we divide following two cases.\par

\smallskip
{\bf Case 1.} Let $\varphi\not\equiv 0$. Since $\varphi$ is an entire function, (\ref{bm.1}) shows that $f$ is also a non-constant entire function. Now we prove that $f=a\Rightarrow  \mathscr{L}_k(f)=a$. If $a$ is a Picard exceptional value of $f$, then $f=a\Rightarrow  \mathscr{L}_k(f)=a$ is true. Suppose $a$ is not a Picard exceptional value of $f$. Let $z_0$ be a zero of $f-a$ of multiplicity $p_0(\geq k)$. Clearly $z_0$ is a zero of $f^{(i)}$ of multiplicity $p_0-i$, where $1\leq i\leq k$. Then (\ref{bm.1}) shows that $z_0$ is a zero of $f-\mathscr{L}_k(f)$ and so $z_0$ is also a zero of $\mathscr{L}_k(f)-a$. Therefore $f=a\Rightarrow  \mathscr{L}_k(f)=a$. Since all the zeros of $f-a$ have multiplicity at least $k$ and $\mathscr{L}_k(f)=\sum_{j=0}^k a_j f^{(j)}$, we have 
\bea\label{rs} f=a\Rightarrow f^{(k)}=a(1-a_0).\eea

Similarly $f=b\Rightarrow  \mathscr{L}_k(f)=b$. Now by Lemma \ref{l2.4}, we have $\rho(f)\leq 1$.
Note that
\bea\label{ebmm.0}\varphi=\left(af^{(1)}/(f-a)-bf^{(1)}/(f-b)\right)\left(1-\sideset{}{_{i=1}^k}{\sum}a_if^{(i)}/f\right)/(a-b).\eea

Therefore using Lemma \ref{l1} to (\ref{ebmm.0}), we get $m(r,\phi)=o(\log r)$ and so $T(r,\varphi)=o(\log r)$, which implies that $\varphi$ is a constant, say $c\in\mathbb{C}\backslash \{0\}$. Then from (\ref{bm.1}), we have 
\bea\label{bm.1a} f^{(1)}\left(f-\mathscr{L}_k(f)\right)=c(f-a)(f-b),\eea
which shows that $f$ is a transcendental entire function.
Differentiating (\ref{bm.1a}) once, we get
\bea\label{bm.1aa} f^{(2)}(f-\mathscr{L}_k(f))+f^{(1)}(f^{(1)}-\mathscr{L}_k^{(1)}(f))=c f^{(1)}(f-a+f-b).\eea

Denote by $S_{(m,n)}(a, a(1-a_0))$ the set of those points $z\in\mathbb{C}$ such that $z$ is an $a$-point of $f$ of order $m$ and an $a(1-a_0)$-point of $f^{(k)}$ of order $n$. 

\smallskip   
Now we consider following two sub-cases.\par

\smallskip
{\bf Sub-case 1.1.} Let $a(1-a_0)=0$. Then from (\ref{rs}), we have $f=a\Rightarrow f^{(k)}=0$ and so all the zeros of $f-a$ have multiplicity at least $k+1$. Let $z_{m,n}\in S_{(m,n)}(a,0)\;(m\geq k+1)$. Then $f(z_{m,n})=0$ and $f^{(k)}(z_{m,n})=0$. 
Observe that if $m\geq k+2$, then from (\ref{bm.1a}), we get a contradiction. Hence $m=k+1$ and so all the zeros of $f-a$ have multiplicity exactly $k+1$. Let $z_{k+1,n}\in S_{(k+1,n)}(a,0)$. If $n\geq 2$, then from (\ref{bm.1a}), we get a contradiction. So $n=1$. 
Locally
\bea\label{rs1}f(z)-a=A(z-z_{k+1,1})^{k+1}+O\left((z-z_{k+1,1})^{k+2}\right).\eea

Now using (\ref{rs1}) to (\ref{bm.1aa}), one can easily get  
$f^{(k+1)}(z_{k+1,1})=(b-a)c/(k+1)$. Consequently 
\bea\label{ebmm12.2} f=a\Rightarrow f^{(k+1)}=(b-a)c/(k+1).\eea

\medskip
First suppose $f\neq a$. Then $f(z)=a+A\exp(\gamma z)$ and so $f^{(i)}(z)=A\gamma^i\exp \gamma z$ for $i=1,2,\ldots,k$, where $A, \gamma\in\mathbb{C}\backslash \{0\}$. Clearly $\mathscr{L}_k(f(z))=A\sum_{i=0}^k a_i \gamma^i \exp(\gamma z)+a$ and $b$ is not a Picard exceptional values of $f$. Let $f(z_1)=b$. Since $f=b\Rightarrow \mathscr{L}_k(f)=b$, we have  $f(z_1)=a+A\exp(\gamma z_1)=b$ and $\mathscr{L}_k(f(z_1))=A\sum_{i=0}^k a_i \gamma^i\exp(\gamma z_1)+a=b$. Now eliminating $\exp(\gamma z_1)$, we get $\sum_{i=0}^k a_i\gamma^i=1$. Therefore $f(z)=a+A\exp(\gamma z)$, where $A,\gamma\in\mathbb{C}\backslash \{0\}$ such that $\sum_{i=1}^k a_i\gamma^{i-1}=0$.

\medskip
Next suppose $f\neq b$. Clearly $a$ is not a Picard exceptional value of $f$. In this case also, we obtain $f(z)=b+A\exp(\gamma z)$, where $A, \gamma\in\mathbb{C}\backslash \{0\}$ such that $\sum_{i=1}^k a_i \gamma^{i-1}=0$.

\medskip
Henceforth we suppose $a$ and $b$ are not Picard exceptional values of $f$. Since all the zeros of $f-a$ have multiplicity exactly $k+1$, we take $f-a=g^{k+1}$, where $g$ is a transcendental entire function having only simple zeros. Now from the proof of the Theorem 1.1 \cite{MSS1}, we see that
\bea\label{ebbm.2} f^{(k)}=(k+1)!g(g^{(1)})^{k} +k(k-1)(k+1)!g^{2}(g^{(1)})^{k-2}g^{(2)}/4+R_1(g),\eea
\bea\label{ebbm.3} f^{(k + 1)}=(k + 1)!(g^{(1)})^{k + 1} +k(k + 1)(k + 1)!g(g^{(1)})^{k - 1}g^{(2)}/2+R_2(g),\eea  
\be\label{ebbm.3a} f^{(k + 2)}=(k + 1)!(k+1)(k+2)(g^{(1)})^{k}g^{(2)}/2+R_3(g),\ee  
where $R_{i}(g)$'s are differential polynomial in $g$, where $i=1,2,3$ and each term of $R_{1}(g)$ contains $g^{m} (3 \leq m \leq k$) as a factor. On the other hand from (\ref{bm.1}), we have
\bea\label{ebbm.4} (k+1)g^{k+1}g^{(1)}-(k+1)g^{(1)}\sideset{}{_{i=0}^{k}}{\sum}a_i(g^{k+1})^{(i)}-cg^{k+2}=c(a-b)g.\eea

Denote by $N(r,0;g^{(1)}\mid g^{k+1}\neq b-a)$ the counting function of those zeros of $g^{(1)}$ which are not the zeros of $g^{k+1}+a-b$.
 We denote by $N(r,b-a;g^{k+1}\mid\geq 2)=0$ the counting function of multiple $b-a$-points of $g^{k+1}$.
 
\smallskip
Now we divide following two sub-cases.\par

\smallskip
{\bf Sub-case 1.1.1.} Let $N(r,b-a;g^{k+1}\mid\geq 2)=0$. Now from (\ref{ebbm.4}), we deduce $N(r,0;g^{(1)}\mid g^{k+1}\neq b-a)=0$.
Since $f-a=g^{k+1}$ and $N(r,b-a;g^{k+1}\mid\geq 2)=0$, it follows that $g^{(1)}\neq 0$. Note that $g$ is a transcendental entire function such that $\rho(g)\leq 1$ and $g^{(1)}\neq 0$. So we can take 
\bea\label{ebbm.5} g^{(1)}(z)=d_0\exp(\lambda z),\eea
where $d_0, \lambda\in\mathbb{C}\backslash \{0\}$. On integration, we have 
\bea\label{ebbm.6} g(z)=(d_0\exp(\lambda z)+\lambda d_1)/\lambda,\eea
where $d_1\in\mathbb{C}$. As $0$ is not a Picard exceptional value of $f-a$, we have $d_1\neq 0$.
Let $g(z_{2})=0$. Clearly $f(z_2)=a$. Now from (\ref{ebmm12.2}), we get $f^{(k+1)}(z_{2})=(b-a)c/(k+1)$ and so
from (\ref{ebbm.3}), get
\bea\label{ebbm.00} (g^{(1)}(z_{2}))^{k+1}=(b-a)c/(k+1)(k+1)!.\eea

Clearly from (\ref{ebbm.5})-(\ref{ebbm.00}), we have $d_0^{k+1}\exp((k+1)\lambda z_{2})=(-\lambda d_1)^{k+1}$ and $d_0^{k+1}\exp ((k+1)\lambda z_{2})=(b-a)c/(k+1)(k+1)!$ and so
\bea\label{ebbm.9} (-\lambda d_1)^{k+1}=(b-a)c/(k+1)(k+1)!.\eea

Again from (\ref{ebbm.6}), we have
\bea\label{rs2} g^{k+1}(z)&=&\left(d_0/\lambda\right)^{k+1}\exp (k+1)\lambda z+\;{}^{k+1}C_{1}\left(d_0/\lambda\right)^{k}d_1\exp k\lambda z+\ldots\\&&+\;{}^{k+1}C_{k-1}\left(d_0/\lambda\right)^{2}d_1^{k-1}\exp 2\lambda z+\;{}^{k+1}C_{k}(d_0d_1^{k}/\lambda) \exp \lambda z+d_1^{k+1}\nonumber\eea
and so
\bea\label{rs3} (g^{k+1})^{(i)}(z)&=&\left(d_0/\lambda\right)^{k+1}((k+1)\lambda)^i\exp (k+1)\lambda z+\;{}^{k+1}C_{1}\left(d_0/\lambda\right)^{k}d_1(k\lambda)^i\exp k\lambda z+\ldots\nonumber\\&&+\;{}^{k+1}C_{k-1}\left(d_0\lambda\right)^{2}d_1^{k-1}(2\lambda)^i\exp 2\lambda z+\;{}^{k+1}C_{k}(d_0d_1^{k}\lambda^i/\lambda)\exp \lambda z,\eea
where $i\in\mathbb{N}$. Consequently
\bea\label{rs4} &&\mathscr{L}_k(g^{k+1}(z))\\&=&\left(d_0/\lambda\right)^{k+1}\sideset{}{_{i=0}^{k}}{\sum}a_i\left((k+1)\lambda\right)^i\exp (k+1)\lambda z\nonumber\\&&+\;{}^{k+1}C_{1}\left(d_0/\lambda\right)^{k}\sideset{}{_{i=0}^{k}}{\sum}a_i\left(k\lambda\right)^i d_1\exp k\lambda z+\ldots\nonumber\\&&+\;{}^{k+1}C_{k-1}\left(d_0/\lambda\right)^{2}\sideset{}{_{i=0}^{k}}{\sum}a_i(2\lambda)^{i} d_1^{k-1}\exp 2\lambda z+(k+1)d_0d_1^{k}/\lambda\sideset{}{_{i=0}^{k}}{\sum}a_i\lambda^{i}\exp \lambda z.\nonumber\eea

Now using (\ref{rs2})-(\ref{rs4}) to (\ref{ebbm.4}), we get
\bea\label{ebbm.10} &&\left\{(k+1)/\lambda^{k+1}-((k+1)/\lambda^{k+1})\sideset{}{_{i=0}^k}{\sum}a_i((k+1)\lambda)^i-c/\lambda^{k+2}\right\}d_{0}^{k+2}\exp (k+2)\lambda z\nonumber\\
&&+\left\{(k+1)^{2}/\lambda^{k}-((k+1)^{2}/\lambda^k)\sideset{}{_{i=0}^{k}}{\sum}a_i(k\lambda)^i-c(k+2)/\lambda^{k+1}\right\}d_{0}^{k+1}d_{1}\exp (k+1)\lambda z\nonumber\\
&&+\ldots+\left\{(k+1)^{2}/\lambda-((k+1)^{2}/\lambda)\sideset{}{_{i=0}^{k}}{\sum}a_i\lambda^i-\;{}^{k+2}C_{k}c/\lambda^{2}\right\}d_0^2d_1^k\exp 2\lambda z\\
&&+\left\{k+1-c(k+2)/\lambda\right\}d_{0}d_{1}^{k+1}\exp \lambda z-c d_{1}^{k+2}=(a-b)cd_0/\lambda\exp \lambda z+(a-b)cd_1,\nonumber\eea
which shows by Lemma \ref{l2} that 
\[(k+1)/\lambda^{k+1}-((k+1)/\lambda^{k+1})\sideset{}{_{i=0}^k}{\sum}a_i((k+1)\lambda)^i-c/\lambda^{k+2}=0,\]
\bea\label{ebbm.11} d_1^{k+1}=b-a\;\;\text{and}\;\;\left\{(k+1)-c(k+2)/\lambda\right\}d_{0}d_{1}^{k+1}=(a-b)cd_0/\lambda,\;\;
\text{i.e.},\;\; c=\lambda.\eea

Now from (\ref{ebbm.9}) and (\ref{ebbm.11}), we have $\lambda^k=(-1)^{k+1}/(k+1)(k+1)!.$
If $(a_0,a_1,\ldots, a_{k-1})=(0,0,\ldots,0)$ and $k\geq 2$, then
\beas &&(k+1)/\lambda^{k+1}-((k+1)/\lambda^{k+1})\sideset{}{_{i=0}^k}{\sum}a_i((k+1)\lambda)^i-c/\lambda^{k+2}\\&=&(k+1)\left((-1)^{k+1}k(k+1)!-(k+1)^k\right)/\lambda\neq 0\nonumber\eeas
for $k\geq 2$ and so we get a contradiction. Finally
\[f(z)=\left((d_0/\lambda)\exp \lambda z+d_1\right)^{k+1}+a,\]
where $d_1^{k+1}=b-a$, $\varphi=\lambda$ such that $\lambda^k=(-1)^{k+1}/(k+1)(k+1)!$, $\sideset{}{_{i=0}^k}{\sum}a_i((k+1)\lambda)^i=k$ and $(a_1,\ldots, a_{k-1})\neq (0,0,\ldots,0)$ and $k\geq 2$.\par

\smallskip
{\bf Sub-case 1.1.2.} Let $N(r,b-a;g^{k+1}\mid\geq 2)\neq 0$. Differentiating (\ref{ebbm.4}) twice, we have respectively
\bea\label{ebbm.14a} &&(k+1)^2g^k (g^{(1)})^2+(k+1)g^{k+1}g^{(2)}-(k+1)g^{(1)}\sideset{}{_{i=0}^k}{\sum} a_i (g^{k+1})^{(i+1)}\\&&-(k+1)g^{(2)}\sideset{}{_{i=0}^k}{\sum} a_i (g^{k+1})^{(i)}
-c(k+2)g^{k+1}g^{(1)}=(a-b)cg^{(1)}\nonumber\eea
and
\bea\label{ebbm.14} &&(k+1)^2kg^{k-1}(g^{(1)})^3+3(k+1)^2 g^kg^{(1)}g^{(2)}+(k+1)g^{k+1}g^{(3)}\nonumber\\&&
-(k+1)g^{(3)}\sideset{}{_{i=1}^k}{\sum}a_i(g^{k+1})^{(i)}-2(k+1)g^{(2)}\sideset{}{_{i=1}^k}{\sum}a_i(g^{k+1})^{(i+1)}\nonumber\\&&-(k+1)g^{(1)}\sideset{}{_{i=1}^k}{\sum}a_i(g^{k+1})^{(i+2)}-c(k+2)(k+1)g^k(g^{(1)})^2
-c(k+2)g^{k+1}g^{(2)}\nonumber\\&&=(a-b)cg^{(2)}.
\eea

Now using (\ref{ebbm.2})-(\ref{ebbm.3a}) to (\ref{ebbm.14}), we get
\bea\label{ebbm.15}&&-(k+1)(k+1)!\left((k^2+3k+6)(g^{(1)})^{k+1}g^{(2)}/2+a_{k-1}(g^{(1)})^{k+2}\right)+R_{4}(g)\\&=&(a-b)cg^{(2)}\nonumber,\eea
where $R_{4}(g)$ is a differential polynomial in $g$. Let $z_2$ be a zero of $g$. Then (\ref{ebbm.15}) gives 
\bea\label{ebbm.17} (k^2+3k+6)(g^{(1)}(z_2))^{k+1}g^{(2)}(z_2)+2a_{k-1}(g^{(1)}(z_2))^{k+2}=\frac{2(b-a)c}{(k+1)(k+1)!}g^{(2)}(z_2).\eea
 
Therefore from (\ref{ebbm.00}) and (\ref{ebbm.17}), we get $((k^2+3k+4)/2)g^{(2)}(z_2)+a_{k-1}g^{(1)}(z_2)=0$.
This shows that 
\[g=0\Rightarrow ((k^2+3k+4)/2)g^{(2)}+a_{k-1}g^{(1)}=0.\]

Let 
\bea\label{ebbm.18} H_1=((k^2+3k+4)g^{(2)}+2a_{k-1}g^{(1)})/g.\eea

We now divide following sub-cases.\par

\smallskip
{\bf Sub-case 1.1.2.1.} Let $H_1\equiv 0$. Then (\ref{ebbm.18}), gives 
\[g^{(2)}+\hat a g^{(1)}\equiv 0,\]
where $\hat{a}=2a_{k-1}/(k^2+3k+4)$. Since $g$ is transcendental, it follows that $\hat{a}\neq 0$. On integrating, we get $g(z)=\hat {A}_0\exp(-\hat{a} z)+\hat{B}_0$ where $\hat{A}_0, \hat{B}_0\in\mathbb{C}\backslash \{0\}$. Also we have $f-b=g^{k+1}+a-b$ and so $f^{(1)}=(k+1)g^kg^{(1)}$. Since $g^{(1)}\neq 0$, we have $N(r,b;f\mid \geq 2)=0$, which is impossible.\par

\smallskip
{\bf Sub-case 1.1.2.2.} Let $H_1\not\equiv 0$. Since $g$ has only simple zeros and 
\[g=0\Rightarrow ((k^2+3k+4)/2)g^{(2)}+a_{k-1}g^{(1)}=0,\]
we see that $H_1$ is an entire function and so from (\ref{ebbm.18}), one can conclude that $H_1$ is a constant, say $\delta\in\mathbb{C}\backslash \{0\}$ and so from (\ref{ebbm.18}), we get
\bea\label{ebbm.19} g^{(2)}=\alpha g^{(1)}+\beta g,\;\;
\text{where}\;\; \alpha =-\hat{a}\;\;\text{and}\;\;\beta =\delta/(k^2+3k+4)\neq 0.\eea 

Differentiating (\ref{ebbm.19}) and using it repeatedly, we have
\bea\label{ebbm.20} g^{(i)}=\alpha_{i-1} g^{(1)}+\beta_{i-1} g,\eea
where $i\geq 2$ and $\alpha_{i-1}, \beta_{i-1}\in\mathbb{C}$. Now using (\ref{ebbm.20}) to (\ref{ebbm.2}), we can assume that
\bea\label{ebbm.21} \sideset{}{_{i=0}^k}{\sum}a_i (g^{k+1})^{(i)}=\check{c}_0 g(g^{(1)})^k+\check{c}_1 g^2(g^{(1)})^{k-1}+\ldots+\check {c}_k g^kg^{(1)}+\check{c}_{k+1} g^{k+1},\eea
where $\check{c}_0(=(k+1)!), \ldots,\check{c}_{k+1}\in\mathbb{C}$. Consequently from (\ref{ebbm.4}) and (\ref{ebbm.21}), we get 
\bea\label{ebbm.22} (k+1)\left(\check{c}_0(g^{(1)})^{k+1}+\ldots+\check{c}_k g^{k-1}(g^{(1)})^2+(1-\check{c}_{k+1})g^kg^{(1)}\right)+cg^{k+1}=(b-a)c.\eea

Now differentiating (\ref{ebbm.21}) and using (\ref{ebbm.19}) and (\ref{ebbm.20}), one can easily assume that
\bea\label{ebbm.23} \sideset{}{_{i=0}^k}{\sum}a_i (g^{k+1})^{(i+1)}=\check{d}_0 g(g^{(1)})^k+\check{d}_1 g^2(g^{(1)})^{k-1}+\ldots+\check{d}_k g^kg^{(1)}+\check{d}_{k+1} g^{k+1},\eea
where $\check{d}_0, \ldots,\check{d}_{k+1}(=\check{c}_k\beta)\in\mathbb{C}$. So using (\ref{ebbm.19}), (\ref{ebbm.21}) and (\ref{ebbm.23}) to (\ref{ebbm.14a}), we get
\bea\label{ebbm.24}\check{e}_0 (g^{(1)})^{k+2}+\check{e}_1 g(g^{(1)})^{k+1}+\ldots+\check{e}_{k+1} g^{k+1}g^{(1)}+\check{e}_{k+2} g^{k+2}=(b-a)cg^{(1)}, \eea
where $\check{e}_0=(k+1)(k+1)!$, 
\[\check{e}_{k+1}=(k+1)(-\alpha+\check{d}_{k+1}+\alpha\check{c}_{k+1}+c(k+2))=(k+1)(-\alpha+\check{c}_k\beta+\alpha\check{c}_{k+1}+c(k+2))\]
 and $\check{e}_{k+2}=(k+1)\beta(\check{c}_{k+1}-1).$

Let $z_3$ be a multiple zero of $f-b=g^{k+1}+a-b$. Then $g(z_3)\neq 0$ and $g^{(1)}(z_3)=0$. Clearly from (\ref{ebbm.24}), we get $\check{e}_{k+2}=0$. Since $\beta\neq 0$, we have $\check{c}_{k+1}=1$. Consequently from (\ref{ebbm.24}), we get 
\bea\label{ebbm.25}\check{e}_0 (g^{(1)})^{k+1}+\check{e}_1 g(g^{(1)})^{k}+\ldots+\check{e}_{k+1} g^{k+1}=(b-a)c. \eea

Let $z_3$ be a multiple zero of $f-b=g^{k+1}+a-b$. Clearly $g^{k+1}(z_3)=b-a$ and $g^{(1)}(z_3)=0$. Then from (\ref{ebbm.25}), we conclude that $\check{e}_{k+1}=c$ and so
\bea\label{ebbm.26} (k+1)(\check{c}_k\beta+c(k+2))=c.\eea

On the other hand from (\ref{ebbm.22}), we have 
\bea\label{ebbm.27} (k+1)\check{c}_0(g^{(1)})^{k+1}+\ldots+(k+1)\check{c}_k g^{k-1}(g')^2+cg^{k+1}=(b-a)c.\eea

Now differentiating (\ref{ebbm.27}) and using (\ref{ebbm.19}), one can easily assume that
\bea\label{ebbm.28} \check{f}_0(g^{(1)})^{k}+\ldots+(k+1)(2\check{c}_k\beta+c) g^{k}=0.\eea

If $z_3$ is a multiple zero of $f-b$, then from (\ref{ebbm.28}), we get $2\check{c}_k\beta+c=0$ and so from (\ref{ebbm.26}), we get a contradiction.\par

\smallskip
{\bf Sub-case 1.2.} Let $a(1-a_0)\neq 0$. 
Let $z_{m,n}\in S_{(m,n)}(a, a(1-a_0))\;(m\geq k)$. Clearly $f(z_{m,n})=a$ and $f^{(k)}(z_{m,n})=a(1-a_0)$. If $m\geq k+1$, then $f^{(k)}(z_{m,n})=0$ and so we get a contradiction. Hence $m=k$ and so all the zeros of $f-a$ have multiplicity exactly $k$.
Let $z_{k,n}\in S_{(k,n)}(a, a(1-a_0))$. Locally
\bea\label{rs5}f(z)-a=A(z-z_{k,n})^{k}+O\left((z-z_{k,n})^{k+1}\right)\eea
and
\bea\label{rs6} f^{(k)}(z)-a(1-a_0)=\tilde b_n(z-z_{k,n})^{n}+O\left((z-z_{k,n})^{n+1}\right).\eea

First suppose $n=1$. Then $z_{k,1}$ is a simple zero of both $f^{(k)}-a$ and $\mathscr{L}_k(f)-a$. 
Now from (\ref{rs}) and (\ref{rs5}), we get $f^{(k)}(z_{k,1})=a(1-a_0)$ and $f^{(k)}(z_{k,1})=k!A$ respectively. So $k!A=a(1-a_0)$.
Therefore using (\ref{rs5}) and (\ref{rs6}) to (\ref{bm.1aa}) and then simplifying, we get 
\beas \left(k(\tilde {b}_1+a(1-a_0)a_{k-1})+(a-b)c\right)(z-z_{k,1})^{k-1}+O\left((z-z_{k,1})^k\right)\equiv 0,\eeas
which gives $\tilde {b}_1=-((a-b)c+ka(1-a_0)a_{k-1})/k$, i.e., 
$f^{(k+1)}(z_{k,1})=-((a-b)c+ka(1-a_0)a_{k-1})/k.$ 

Next suppose $n\geq 2$. Then from (\ref{bm.1a}), we get $(a-b)c+ka(1-a_0)a_{k-1}=0$. 

Consequently 
\bea\label{ebm1.2} f=a\Rightarrow f^{(k+1)}=-((a-b)c+ka(1-a_0)a_{k-1})/k.\eea

\smallskip
Note that if $a$ is a Picard exceptional value of $f$, then as usual we obtain
$f(z)=a+A\exp(\delta z)$, where $A,\delta\in\mathbb{C}\backslash \{0\}$ such that 
\[\sideset{}{_{j=0}^k}{\sum}a_j\delta^j=(b-aa_0)/(b-a).\]

Also if $b$ is a Picard exceptional value of $f$, then $f(z)=b+A\exp(\delta z)$, where $A, \delta\in\mathbb{C}\backslash \{0\}$ such that 
\[\sideset{}{_{j=0}^k}{\sum} a_j\delta^j=(a-a_0b)/(a-b).\]

Henceforth we assume that $a$ and $b$ are not Picard exceptional values of $f$. 

\smallskip
Since all the zeros of $f-a$ have multiplicity exactly $k$, we take $f-a=g^k$, where $g$ is a transcendental entire function having only simple zeros. Now from the proof of the Theorem 1.1 \cite{MSS1}, we see that 
\bea\label{ebm.2} f^{(k)}=(g^k)^{(k)}=k!(g^{(1)})^{k}+\frac{k(k-1)}{2}k!g(g^{(1)})^{k-2}g^{(2)}+ R_{5}(g),\eea 
\bea\label{ebm.2a} f^{(k-1)}=(g^k)^{(k-1)}=k!g(g^{(1)})^{k-1}+R_6(f)\eea
and 
\be\label{ebm.3} f^{(k+1)}=(g^k)^{(k+1)} = \frac{k(k+1)}{2}k!(g^{(1)})^{k-1}g^{(2)}+ R_{7}(g),\ee 
where $R_{i}(g)$'s are differential polynomials in $g$, where $i=5,6,7$ such that each term of $R_{5}(g)$ contains $g^{m} (1 \leq m \leq k-1$) as a factor. Also from (\ref{bm.1a}), we get
\bea\label{ebm.4} kg^kg^{(1)}-kg^{(1)}\sideset{}{_{i=1}^k}{\sum} a_i (g^k)^{(i)}-cg^{k+1}=c(a-b)g-ka(1-a_0)g^{(1)}.\eea

Differentiating (\ref{ebm.4}) once again, we get
\bea\label{ebm.5} &&k^2g^{k-1}(g^{(1)})^2+kg^kg^{(2)}-kg^{(2)}\sideset{}{_{i=1}^k}{\sum}a_i (g^k)^{(i)}-kg^{(1)}\sideset{}{_{i=1}^k}{\sum} a_i (g^k)^{(i+1)}-c(k+1)g^kg^{(1)}\nonumber\\&&=c(a-b)g^{(1)}-ka(1-a_0)g^{(2)}.\eea

Now using (\ref{ebm.2}) to (\ref{ebm.4}), we get 
\bea\label{ebm.6}-kk!(g^{(1)})^{k+1}+R_{8}(g)=c(a-b)g-ka(1-a_0)g^{(1)},\eea
where $R_{8}(g)$ is a differential polynomial. Again using (\ref{ebm.2}), (\ref{ebm.3}) to (\ref{ebm.5}), we get
\bea\label{ebm.7}-kk!a_{k-1}(g^{(1)})^{k+1}-kk!\frac{k^2+k+2}{2}(g^{(1)})^kg^{(2)}+R_9(g) =c(a-b)g^{(1)}-ka(1-a_0)g^{(2)},\eea
where $R_{9}(g)$ is a differential polynomial.

Let $z_2$ be a zero of $g$. Then from (\ref{ebm.6}) and (\ref{ebm.7}), we have respectively
\bea\label{ebm.8} (g^{(1)}(z_2))^k=a(1-a_0)/k!\eea
and 
\beas\label{ebm.9} a_{k-1}(g^{(1)}(z_2))^{k+1}+\frac{k^2+k+2}{2}(g^{(1)}(z_2))^kg^{(2)}(z_2)=-\frac{c(a-b)}{kk!}g^{(1)}(z_2)+\frac{a(1-a_0)}{k!}g^{(2)}(z_2)\eeas
and so $k^2(k+1)a(1-a_0)g^{(2)}(z_2)+2\left(ka_{k-1} a(1-a_0)+c(a-b)\right)g^{(1)}(z_2)=0.$
This shows that
$g=0\Rightarrow k^2(k+1)a(1-a_0)g^{(2)}+2\left(k a_{k-1} a(1-a_0)+c(a-b)\right)g^{(1)}=0$.
Let 
\bea\label{ebm.10} H_2=\frac{k^2(k+1)a(1-a_0)g^{(2)}+2\left(k a_{k-1} a(1-a_0)+c(a-b)\right)g^{(1)}}{g}.\eea

We now divide following sub-cases.\par

\smallskip
{\bf Sub-case 1.2.1.} Let $H_2\equiv 0$. Then from (\ref{ebm.10}), we have $g^{(2)}-c_0g^{(1)}\equiv 0$, where $c_0=(2c(b-a)-2k a_{k-1} a(1-a_0))/k^2(k+1)a(1-a_0)\neq 0$. 
On integration, we get 
\bea\label{ebmm.1}\label{ebmm.2} g^{(1)}(z)=d_0\exp c_0 z\;\;\text{and}\;\;g(z)=A_0\exp c_0 z+B_0,\eea 
where $d_0\in\mathbb{C}\backslash \{0\}$, $A_0=d_0/c_0$ and $B_0\in\mathbb{C}$.
Since $a$ is not a Picard exceptional value of $f$, we have $B_0\neq 0$. Let $g(z_2)=0$. Then $f(z_2)=a$.
Now from (\ref{ebm.8}) and (\ref{ebmm.1}), we have 
$A_0^k\exp\left(k c_0 z_{2}\right)=(-B_0)^k$ and $A_0^k\exp\left(k c_0 z_{2}\right)=a(1-a_0)/k!$ from which we have
\bea\label{ebmm.5} (-B_0)^k=a(1-a_0)/k!c_0^k.\eea

Also from (\ref{ebmm.2}), we get
\bea\label{rss1} g^{k}(z)=A_0^{k}\exp kc_0z+\;{}^{k}C_{1}A_0^{k-1}B_0\exp (k-1)c_0z+\ldots+\;{}^{k}C_{k-1}A_0B_0^{k-1}\exp c_0z+B_0^{k}\eea
and so
\bea\label{rss2} (g^k(z))^{(i)}&=&k^iA_0^kc_0^i\exp kc_0z+\;{}^{k}C_{1}(k-1)^iA_0^{k-1}B_0c_0^i\exp (k-1)c_0z+\ldots\\&&+\;{}^{k}C_{k-1}A_0B_0^{k-1}c^i_0\exp c_0z\nonumber\eea
for $i\in\mathbb{N}$ and 
\bea\label{ebmm.5a} \mathscr{L}_k(g^k(z))&=&A^k_0\sideset{}{_{i=0}^{k}}{\sum}a_i(kc_0)^i\exp kc_0z+\;{}^{k}C_{1}A_0^{k-1}B_0\sideset{}{_{i=0}^{k}}{\sum}a_i((k-1)c_o)^i\exp (k-1)c_0z\nonumber\\
&&+\cdots+\;{}^{k}C_{k-1}A_0B_0^{k-1}\sideset{}{_{i=0}^{k}}{\sum}a_ic_0^i\exp c_0z+a_0B_0^k.\eea

Now using (\ref{rss1})-(\ref{ebmm.5a}) to (\ref{ebm.4}), we get 
\bea\label{ebmm.6} &&\left(kc_0-kc_0\sideset{}{_{i=0}^{k}}{\sum}a_i(kc_0)^i-c\right)A_0^{k+1}\exp (k+1)c_0z\\
&&+\left(k\;{}^{k}C_{1}c_0-k\;{}^{k}C_{1}c_0\sideset{}{_{i=0}^{k}}{\sum}a_i((k-1)c_0)^i-c\;{}^{k+1}C_{1}\right)A_0^kB_0\exp kc_0z+\cdots\nonumber\\
 &&+\left(k\; {}^{k}C_{k-1}c_0-k\; {}^{k}C_{k-1}c_0\sideset{}{_{i=0}^{k}}{\sum}a_ic_0^i-c\; {}^{k+1}C_{k-1}\right)A_0^2B_0^{k-1}\exp 2c_0z\nonumber\\
&&+\left(kc_0-c\; {}^{k+1}C_{k}\right)A_0B_0^k\exp c_0z-cB_0^{k+1}\nonumber\\
&&=A_0(c(a-b)-ka(1-a_0)c_0)\exp c_0z+c(a-b)B_0\nonumber\eea
which shows by Lemma \ref{l2} that $kc_0-kc_0\sideset{}{_{i=0}^{k}}{\sum}a_i(kc_0)^i-c=0$, $B_0^k=b-a$ and  
\bea\label{ebmm.7} A_0B_0^k\left(kc_0-c\; {}^{k+1}C_{k}\right)=A_0(c(a-b)-ka(1-a_0)c_0),\;\;\text{i.e.},\;\;c_0=(b-a)c/(b-aa_0).\eea

Since $B_0^k=b-a$, from (\ref{ebmm.5}) and (\ref{ebmm.7}), we get
\bea\label{ebmm.8}c^k=(-1)^ka(1-a_0)(b-aa_0)^k/k!(b-a)^{k+1}.\eea

Again since $c_0=(2c(b-a)-2k a_{k-1} a(1-a_0))/k^2(k+1)a(1-a_0)$, from (\ref{ebmm.7}), we get
\bea\label{ebmm.8a} (b-a)\left(2(b-aa_0)-k^2(k+1)a(1-a_0)\right)c=2ka(1-a_0)(b-aa_0)a_{k-1}.\eea

\smallskip
First suppose $(a_0, a_1,\ldots,a_{k-1})=(0,0,\ldots,0)$. Since $c\neq 0$, from (\ref{ebmm.8a}), we get $b=(k^2(k+1)a)/2$. Again since $a\neq b$, we have $k\geq 2$. Let $k=2$. Then $b=6a$, $B_0^2=5a$ and so from (\ref{ebmm.8}), we have
$c^2=18/125$. Note that $f-a=g^k$ and so from (\ref{ebmm.2}), we have 
\beas f(z)-b=10d_0^2a_2\exp(5cz/3)+(12B_0d_0/5c)\exp(5cz/6)\eeas
and 
\beas\mathscr{L}_k(f(z))-b=4d_0^2\exp(5cz/3)+(6B_0d_0/25c)\exp(5cz/6)-6a.\eeas

It is easy to verify that $f-b$ and $\mathscr{L}_k(f)-b$ have no common zeros and so $f=b\not\Rightarrow  \mathscr{L}_k(f)=b$, which is impossible here. Next suppose $k\geq 3$. Now from (\ref{ebmm.7}), we calculate that
\beas A_1:\; kc_0-kc_0\sideset{}{_{i=0}^k}{\sum}a_i(kc_0)^i-c=k(b-a)c/b-k^{k+1}((b-a)c/b)^{k+1}-c,\eeas
\beas A_2: &&k{}^{k}C_{1}c_0-k{}^{k}C_{1}c_0\sideset{}{_{i=0}^k}{\sum}a_i((k-1)c_0)^i- {}^{k+1}C_{1}c\\&=&k^2(b-a)c/b-k^2(k-1)^k((b-a)c/b)^{k+1}-(k+1)c\eeas
and
\beas A_3: k {}^{k}C_{k-1}c_0-k {}^{k}C_{k-1}c_0\sideset{}{_{i=0}^{k}}{\sum}a_ic_0^i-{}^{k+1}C_{k-1}c
&=&k^2(b-a)c/b-k^2((b-a)c/b)^{k+1}\\&&-k(k+1)c/2.\eeas

If possible suppose $A_i=0$, for $i=1,2,3$. 
Now $A_1=0$ and $A_2=0$ imply that
\bea\label{ss4} (b-a)k^2\left(1-((k-1)/k)^k\right)=b\left((k+1)-k((k-1)/k)^k\right).\eea

Again $A_1=0$ and $A_3=0$ imply that
\bea\label{ss5}  (b-a)k^2(1-1/k^k)=b\left(k(k+1)/2-1/k^{k-1}\right).\eea

Now from (\ref{ss4}) and (\ref{ss5}), we get $k(k-1)^{k+1}-(k-2)(k+1)k^k=2$ which is impossible for $k\geq 3$. 
Hence $A_1=0$, $A_2=0$ and $A_3=0$ can not hold simultaneously. Therefore using Lemma \ref{l2} to (\ref{ebmm.6}), we get a contradiction.

\smallskip
Next suppose $(a_0,a_1,\ldots,a_{k-1})\not=(0,0,\ldots,0)$. In this case 
\[f(z)=\left(A_0\exp c_0 z+B_0\right)^k+a,\]
where $A_0\in\mathbb{C}\backslash \{0\}$, $B_0^k=b-a$, $c_0=(b-a)c/(b-aa_0)$ and $\varphi=c$ such that \[c^k=(-1)^ka(1-a_0)(b-aa_0)^k/k!(b-a)^{k+1}\;\;\text{and}\;\;kc_0-kc_0\sideset{}{_{i=0}^{k}}{\sum}a_i(kc_0)^i-c=0.\]

\smallskip
{\bf Sub-case 1.2.2.} Let $H_2\not\equiv 0$. Since $g$ has only simple zeros and 
\[g=0\Rightarrow k^2(k+1)a(1-a_0)g^{(2)}+2\left(k a_{k-1} a(1-a_0)+c(a-b)\right)g^{(1)}=0,\]
from (\ref{ebm.10}), we see that $H$ is an entire function.
Using Lemma \ref{l1} to (\ref{ebm.10}), we can say that $H_2$ is a constant, say $\lambda_0\in\mathbb{C}\backslash \{0\}$ and so
\bea\label{ebm.12} g^{(2)}=\check{a}_1g^{(1)}+\check{b}_1g,\;\;
\text{where}\;\; \check{a}_1=c_0\;\;\text{and}\;\;\check{b}_1=\lambda_0/k^2(k+1)a(1-a_0)\neq 0.\eea 

Differentiating (\ref{ebm.12}) and using it repeatedly, we have
\bea\label{ebm.13} g^{(i)}=\check{a}_{i-1}g^{(1)}+\check{b}_{i-1}g,\eea
where $i\geq 2$ and $\check{a}_{i-1}, \check{b}_{i-1}\in\mathbb{C}$. Let $\lambda_1, \lambda_2$ be the roots of $m^2-\check a_1m-\check b_1=0$. Then
\bea\label{ebm.17} \lambda_1+\lambda_2=c_0\;\;\text{and}\;\;\lambda_1 \lambda_2=-\lambda_0/k^2(k+1)a(1-a_0).\eea

Clearly $\lambda_1\neq 0$ and $\lambda_2\neq 0$. Now the equation (\ref{ebm.12}) has one of the following solutions:
\begin{enumerate}
\item[(1)] $g(z)=(A_1 z+B_1)\exp \lambda_1z$, where $A_1\in\mathbb{C}\backslash \{0\}$, $B_1\in\mathbb{C}$, if $\lambda_1=\lambda_2$;

\smallskip
\item[(2)] $g(z)=A_2\exp \lambda_1 z+B_2\exp \lambda_2 z$, where
where $A_2, B_2\in\mathbb{C}\backslash \{0\}$, if $\lambda_1\neq \lambda_2$.
\end{enumerate}

Now we divide following two sub-cases.\par

\smallskip
{\bf Sub-case 1.2.2.1.} Let $g(z)=(A_1 z+B_1)\exp \lambda_1z$, where $A_1\in\mathbb{C}\backslash \{0\}$, $B_1\in\mathbb{C}$. Then
$f(z)=P(z)\exp k\lambda_1 z+a$, where $P(z)=(A_1 z+B_1)^k$ and so
$f^{(i)}(z)=P_i(z)\exp \lambda_1 z$, where 
\[P_i=(\exp (k\lambda_1)^i P+\sideset{}{_{j=1}^i}{\sum} {}^{i}C_{j} (\exp k\lambda_1)^{i-j}P^{(j)}.\]

Therefore $\mathscr{L}_k(f(z))=Q(z)\exp(k\lambda_1 z)+aa_0$, where 
\[Q(z)=\sideset{}{_{i=0}^k}{\sum}a_i(k\lambda_1)^i\;P+\sideset{}{_{i=1}^{k}}{\sum}a_i\sideset{}{_{j=1}^i}{\sum} {}^{i}C_{j} (k\lambda_1)^{i-j}P^{(j)}.\]

Clearly $f-b$ has infinitely many zeros. Let $z_3$ be a zero of $f-b$. Then $f(z_3)=b$. Since $f=b\Rightarrow \mathscr{L}_k(f)=b$, we have $\mathscr{L}_k(f(z_3))=b$. Therefore 
$P(z_3)\exp (k\lambda_1) z_3=b-a$ and $Q(z_3)\exp((k\lambda_1) z_3)=b-aa_0$. If $b=aa_0$, then $Q(z_3)=0$ and so $f-b$ has only finitely many zeros, which is impossible. Hence $b\neq aa_0$. Eliminating $\exp (k\lambda_1) z_3$, we get $Q(z_3)/P(z_3)=b/(b-aa_0)$, from which we get $Q/P\equiv b/(b-aa_0)$ and so from above, we have 
\beas\label{e3.1.11} b/(b-aa_0)=Q/P=\sideset{}{_{i=0}^k}{\sum} a_i (k\lambda_1)^i+\sideset{}{_{i=1}^{k}}{\sum} a_i\sideset{}{_{j=1}^i}{\sum} {}^{i}C_{j} (k\lambda_1)^{i-j}P^{(j)}/P,\eeas
which shows that $P$ is a constant, which is absurd.

\smallskip
{\bf Sub-case 1.2.2.2.} Let $g(z)=A_2\exp \lambda_1 z+B_2\exp \lambda_2 z$, where $A_2, B_2\in\mathbb{C}\backslash \{0\}$. Clearly $g^{(1)}(z)=A_2\lambda_1 \exp \lambda_1 z+B_2\lambda_2 \exp \lambda_2 z$. Also from (\ref{ebm.17}), we have
\bea\label{ebm2.3a} \lambda_1+\lambda_2=(2c(b-a)-2ka(1-a_0)a_{k-1})/k^2(k+1)a(1-a_0).\eea

Now from (\ref{ebm.4}), we get $N(r,0;g^{(1)}\mid g^k\neq b-a)=0$. If $f-b=g^k+a-b$ has no multiple zeros, then $g^{(1)}\neq 0$, which is absurd. Hence $N(r,b-a;g^k\mid \geq 2)\neq 0$. Let $d=\lambda_1-\lambda_2$. Then
\bea\label{ebm.22} g^i(z)=\left(A_2^i\exp id z+\;{}^{i}C_{1}A_2^{i-1}B_2\exp (i-1)d z+\ldots+B_2^i\right)\exp i\lambda_2 z\eea
and 
\bea\label{ebm.23} (g^{(1)}(z))^i=\left(\lambda_1^i A_2^i\exp id z+\;{}^{i}C_{1}\lambda_1^{i-1}A_2^{i-1}\lambda_2 B_2 \exp (i-1)d z+\ldots+\lambda_2^iB_2^i\right)\exp i\lambda_2 z\eea
for $i=1,\ldots, k+1$. Let $z_2$ be a zero of $g$. From (\ref{ebm.8}), we get 
\[(g^{(1)}(z_{2}))^k=a(1-a_0)/k!.\]

Since $g(z_2)=0$, we have 
\[A_2\exp \lambda_1 z_2+B_2\exp \lambda_2 z_2=0,\;\;\text{i.e.,}\;\;A_2\exp \lambda_1 z_2=-B_2\exp \lambda_2 z_2.\]

Again since $(g^{(1)}(z_{2}))^k=a(1-a_0)/k!$, we get 
\[(A_2\lambda_1\exp \lambda_1 z_1+B_2\lambda_2\exp \lambda_2 z_2)^k=a(1-a_0)/k!\]
and so $\exp k\lambda_2 z_2=a(1-a_0)/k!B_2^k (\lambda_2-\lambda_1)^k$. Also 
\[\exp k\lambda_1 z_2=((-1)^kB_2^k/A_2^k)\exp k\lambda_2 z_2=a(1-a_0)/k!A_2^k (\lambda_1-\lambda_2)^k.\]

Consequently $A_2$ and $B_2$ are connected by the relation 
\bea\label{aaa.1} \left(a(1-a_0)/k!B_2^k (\lambda_2-\lambda_1)^k\right)^{\lambda_1}=\left(a(1-a_0)/k!A_2^k (\lambda_1-\lambda_2)^k\right)^{\lambda_2}.\eea

Now from (\ref{ebm.2}) and (\ref{ebm.13}), we have
\bea\label{ebm.20} (g^k)^{(k)}=\hat{c}_0(g^{(1)})^k+\hat{c}_1g(g^{(1)})^{k-1}+\ldots+\hat{c}_{k-1}g^{k-1}g^{(1)}+\hat{c}_kg^k,\eea
where $\hat c_0=k!$ and $\hat{c}_i\in\mathbb{C}$ for $i\geq 2$. Differentiating (\ref{ebm.20}) once, we get
\beas (g^k)^{(k+1)}&=& \hat{c}_0k(g^{(1)})^{k-1}g^{(2)}+\hat{c}_1(g^{(1)})^k+\hat{c_1}(k-1)g (g^{(1)})^{k-2}g^{(2)}\\&&+\ldots+ \hat {c}_{k-1}(k-1)g^{k-2}(g^{(1)})^2+\hat{c}_{k-1}g^{k-1}g^{(2)}+\hat{c}_kkg^{k-1}g^{(1)}\eeas
and so using (\ref{ebm.12}), we get
\bea\label{ebm.20a} (g^k)^{(k+1)}&=& (k\check{a}_1\hat{c}_0+\check{c}_1)(g^{(1)})^k+(k\check{b}_1\hat{c}_0+(k-1)\check{a}_1\hat {c}_1)g(g^{(1)})^{k-1}\\
&&+\ldots+(\check{a}_1\hat{c}_{k-1}+k\hat{c}_k)g^{k-1}g^{(1)}+\check{b}_1\hat{c}_{k-1}g^k.\nonumber\eea

Let $z_2$ be a zero of $g$. Then from (\ref{ebm.8}), we have $(g^{(1)}(z_{2}))^k=a(1-a_0)/k!$. Consequently
from (\ref{ebm1.2}) and (\ref{ebm.20a}), we get
\[\hat{c}_1=k!(k-1)c(b-a)/k(k+1)a(1-a_0)\;-k!(k-1)a_{k-1}/(k+1).\]

Also using (\ref{ebm.20}) to (\ref{ebm.4}), we get 
\bea\label{ebm.21} \sideset{}{_{i=0}^{k+1}}{\sum}a_{1i}g^i(g^{(1)})^{k+1-i}=b_{10}g+b_{11}g^{(1)},\eea
where $a_{1i}\in\mathbb{C}$ such that 
\bea\label{al1} a_{10}=k!,\;a_{11}=\hat{c}_1+k!a_{k-1},\; b_{10}=-c(a-b)/k\;\text{and}\;b_{11}=a(1-a_0).\eea

Let $z_3$ be a multiple zero of $f-b$. Then $g^{(1)}(z_3)=0$, $g^k(z_3)=b-a$ and so from (\ref{ebm.21}), we get $(b-a)a_{1,k+1}=b_{10}$, i.e., $a_{1,k+1}=-c/k$. Using (\ref{ebm.12}), (\ref{ebm.20}) and (\ref{ebm.20a}) to (\ref{ebm.5}), we get
\bea\label{ebm.21a} \sideset{}{_{i=0}^{k+1}}{\sum}a_{2i}g^i(g^{(1)})^{k+1-i}=b_{20}g+b_{21}g^{(1)},\eea
where $a_{2i}\in\mathbb{C}$ such that
\bea\label{al2} a_{20}
=(k!(k^2+k+2)(b-a)c)/(k(k+1)a(1-a_0))-2k!a_{k-1}/(k+1),\eea
\bea\label{al3} a_{21}&=&
\frac{k!}{k(k+1)a(1-a_0)}\left((k+1)\lambda_0+\frac{2(k-1)(b-a)^2c^2}{(k+1)a(1-a_0)}\right)\\
&&+\frac{k!(k^3-4k^2+5k+2)}{k(k+1)^2}\frac{(b-a)c}{a(1-a_0)}a_{k-1}
-\frac{k!(k^3-2k^2+3k+2)}{(k+1)^2}a_{k-1}^2+kk!a_{k-2}\nonumber\eea
and
\bea\label{al4} b_{20}=ka(1-a_0)\check{b}_1\;\;\text{and}\;\;b_{21}=ka(1-a_0)\check{a}_1-(a-b)c.\eea

If $z_3$ is a multiple zero of $f-b$, then by a simple calculation on (\ref{ebm.21a}), we get 
\[(b-a)a_{2, k+1}=b_{20},\;\;\text{i.e.,}\;\;a_{2, k+1}=ka(1-a_0)\check b_1/(b-a)\neq 0.\]

Now we consider following two sub-cases.\par

\smallskip
{\bf Sub-case 1.2.2.2.1.} Let $a_{20}=0$. If possible suppose $b_{21}\neq 0$. Now if $z_3$ is a multiple zero of $f-b$, then by a simple calculation on (\ref{ebm.21a}), we get a contradiction. Hence $b_{21}=0$ and so from (\ref{ebm.21a}), we have 
\[\sideset{}{_{i=0}^k}{\sum} a_{2,i+1}g^i(g^{(1)})^{k-i}=b_{20}.\]

Using this, we get from (\ref{ebm.21}) that
\bea\label{bbb.1}\sideset{}{_{i=0}^k}{\sum}\left(a_{1i}-b_{11}a_{2,i+1}/b_{20}\right)g^i(g^{(1)})^{k+1-i}+a_{1,k+1}g^{k+1}=b_{10}g,\eea
which demands that $a_{10}b_{20}-a_{21}b_{11}=0$ and so from (\ref{al1}), (\ref{al3}) and (\ref{al4}), we get
\bea\label{aaa.3} \lambda_0&=&-\frac{2(k-1)(b-a)^2c^2}{k(k+1)a(1-a_0)}-\frac{k^3-4k^2+5k+2}{k(k+1)}c(b-a)a_{k-1}+\frac{k^3-2k^2+3k+2}{(k+1)}\times \nonumber\\&&a(1-a_0)a_{k-1}^2-k(k+1)a(1-a_0)a_{k-2}.\eea

Clearly from (\ref{ebm.17}) and (\ref{aaa.3}), we get
\bea\label{aaa.4} \lambda_1\lambda_2&=&\frac{2(k-1)(b-a)^2c^2}{k^3(k+1)^2a^2(1-a_0)^2}+\frac{k^3-4k^2+5k+2}{k^3(k+1)^2}\frac{c(b-a)a_{k-1}}{a(1-a_0)}-\frac{k^3-2k^2+3k+2}{k^2(k+1)^2}\times\nonumber\\&&a_{k-1}^2+a_{k-2}/k.\eea

Now from (\ref{ebm.22}), (\ref{ebm.23}) and (\ref{bbb.1}), we get
\bea\label{e3.33ss}Q_{k}\left(\exp dz\right)\exp k\lambda_2 z=b_{10},\eea
where 
\[Q_{k}\left(\exp dz\right)=d_0\exp k dz+d_1\exp (k-1)dz+\ldots+d_{k+1}\]
and $d_i\in\mathbb{C}$, $i=0,1,2,\ldots, k+1$. 
Therefore from (\ref{e3.33ss}), we get $Q_{k}\left(\exp dz\right)=C_0\exp mdz$, where $C_0\in\mathbb{C}\backslash \{0\}$ and $0\leq m\leq k$.  Consequently from (\ref{e3.33ss}), we have $C_0\exp (k\lambda_2+md) z=b_{10}$ and so $k\lambda_2+md=0$. Since $\lambda_1, \lambda_2\neq 0$, we get $m\neq 0, k$ and so $1\leq m\leq k-1$, which demands that $k\geq 2$. Again since $d=\lambda_1-\lambda_2$, we have $\lambda_1/\lambda_2=(m-k)/m$ and so from (\ref{ebm2.3a}), we have
\bea\label{e3.37}\begin{cases}
\lambda_1=\frac{(m-k)(2(b-a)c-2ka(1-a_0)a_{k-1})}{k^2(k+1)(2m-k)a(1-a_0)},\\

\medskip
\lambda_2=\frac{m(2(b-a)c-2ka(1-a_0)a_{k-1})}{k^2(k+1)(2m-k)a(1-a_0)},
\end{cases}\eea
where $m\in\mathbb{N}$ such that $1\leq m\leq k-1$. Putting the values of $\lambda_1$ and $\lambda_2$ in (\ref{aaa.4}), we get
\bea\label{e3.38s}&&\left(\frac{4m(m-k)}{(2m-k)^2}-2k(k-1)\right)(b-a)^2c^2-k\left(\frac{8m(m-k)}{(2m-k)^2}+k^3-2k^2+3k+2\right)\times\nonumber\\&& a(b-a)a_{k-1}c+k^2\left(4k\frac{m(m-k)}{(2m-k)^2}+k^3-2k^2+3k+2\right)a^2(1-a_0)a_{k-1}^2\nonumber\\&&-k^3(k+1)^2 a^2(1-a_0)^2a_{k-2}=0.\eea

\smallskip
{\bf Sub-case 1.2.2.2.2.} Let $a_{20}\neq 0$. 

\smallskip
First suppose $(b_{10},b_{11})=K_1(b_{20}, b_{21})$, where $K_1\in\mathbb{C}\backslash \{0\}$. Then by a routine calculation, we get from (\ref{al1}) and (\ref{al4}) that
\bea\label{rob2.0} \lambda_0=\frac{k^2+k+2}{k}\frac{(b-a)^2c^2}{a(1-a_0)}-2(b-a)a_{k-1}c.\eea

Clearly from (\ref{ebm.17}) and (\ref{rob2.0}), we get
\bea\label{rob2.1} \lambda_1\lambda_2=-\frac{(k^2+k+2)}{k^3(k+1)}\frac{c^2(b-a)^2}{a^2(1-a_0)^2}+\frac{2(b-a)a_{k-1}c}{k^2(k+1)a(1-a_0)}.\eea

Now from (\ref{ebm.22}), (\ref{ebm.23}) and (\ref{ebm.21}), we get
\bea\label{e3.33} \hat{Q}_{k+1}\left(\exp dz\right)\exp k\lambda_2 z=d_0\exp d z+d_1,\eea
where 
\[\hat{Q}_{k+1}\left(\exp dz\right)=d_{10}\exp (k+1) dz+d_{11}\exp kdz+\ldots+d_{1,k+1}\]
and $d_0, d_1, d_{1i}\in\mathbb{C}$, $i=0,1,2,\ldots, k+1$. Then from (\ref{e3.33}), we get 
\[\hat{Q}_{k+1}\left(\exp dz\right)=C_1\exp mdz (d_0\exp d z+d_1),\]
where $C_1\in\mathbb{C}\backslash \{0\}$ and $0\leq m\leq k$.  So from (\ref{e3.33}), we have $C_1\exp (k\lambda_2+md) z=1$. Now proceeding in the same way as done above, we get the same values of $\lambda_1$ and $\lambda_2$ as given by (\ref{e3.37}). In this case also $k\geq 2$.
Putting the values of $\lambda_1$ and $\lambda_2$ from (\ref{e3.37}) into (\ref{rob2.1}), we get
\bea\label{e3.38}&&\left(\frac{4m(m-k)}{(2m-k)^2}+k(k+1)(k^2+k+2)\right)(b-a)^2c^2\\&&-k\left(\frac{8m(m-k)}{(2m-k)^2}+2k(k+1)\right)a(b-a)a_{k-1}c+\frac{m(m-k)4k^3}{(2m-k)^2}a^2(1-a_0)^2a_{k-1}^2=0.\nonumber\eea

\smallskip
Next we suppose $(b_{10},b_{11})\neq K_1(b_{20}, b_{21})$, where $K_1\in\mathbb{C}\backslash \{0\}$. If $(a_{10}, a_{11}, \ldots, a_{1,k+1})=K_2(a_{20}, a_{21}, \ldots, a_{2,k+1})$, where $K_2\in\mathbb{C}\backslash \{0\}$. Then from (\ref{ebm.21}) and (\ref{ebm.21a}), we get 
\[(b_{10}-K_2 b_{20})g+(b_{11}-K_2b_{21})g^{(1)}=0.\]

Since $(b_{10},b_{11})\neq K_1(b_{20}, b_{21})$, at least one of $b_{10}-K_2 b_{20}$ and $b_{11}-K_2b_{21}$ is non-zero and so we get a contradiction. Hence $(a_{10}, a_{11}, \ldots, a_{1,k+1})\neq K_2(a_{20}, a_{21}, \ldots, a_{2,k+1})$, where $K_2\in\mathbb{C}\backslash \{0\}$. Suppose $K_2=a_{10}/a_{20}$. Clearly $(a_{11}, \ldots, a_{1,k+1})\neq \frac{a_{10}}{a_{20}}(a_{21}, \ldots, a_{2,k+1})$.
Multiplying (\ref{ebm.21}) by $a_{20}$ and (\ref{ebm.21a}) by $a_{10}$ and then subtracting, we get
\bea\label{ebm2.1} &&D_1(g^{(1)})^k+D_2g(g^{(1)})^{k-1}+\ldots+D_{k-1}g^{k-2}(g^{(1)})^2+D_kg^{k-1}g^{(1)}+D_{k+1} g^k\\&=&(b-a)D_{k+2},\nonumber\eea
where $D_i\in\mathbb{C}$ for $i=1,2,\ldots,k+1$ such that
\beas D_1&=&(k!)^2\bigg(\frac{\lambda_0}{ka(1-a_0)}-\frac{(k-1)(k^2-k+2)}{k^2(k+1)^2}\frac{(b-a)^2c^2}{a^2(1-a_0)^2}+\frac{k^3-6k^2+3k-4}{k(k+1)^2}\times\\
 &&\;\;\;\;\frac{(b-a)a_{k-1}c}{a(1-a_0)}-\frac{k^3-2k^2+3k-2}{(k+1)^2}a_{k-1}^2+ka_{k-2}\bigg)\eeas
and
\beas D_{k+2}=k!\left(\frac{\lambda_0}{k(k+1)(b-a)}-\frac{k^2+k+2}{k^2(k+1)}\frac{(b-a)c^2}{a(1-a_0)}+\frac{2a_{k-1}}{(k+1)k}\right).\eeas

If $D_1=0$ and $D_{k+2}\neq 0$, then from (\ref{ebm2.1}), we get $N(r,0;g)=0$, which is impossible. Similarly if $D_1\neq 0$ and $D_{k+2}=0$, then from (\ref{ebm2.1}), we again get a contradiction. Hence either $D_1=0$ and $D_{k+2}=0$ or $D_1\neq 0$ and $D_{k+2}\neq 0$.

First suppose $D_1=0$ and $D_{k+2}=0$. Since $(a_{11}, \ldots, a_{1,k+1})\neq \frac{a_{10}}{a_{20}}(a_{21}, \ldots, a_{2,k+1})$, it follows that at least one of $D_2$, $\ldots$, $D_{k+1}$ is non-zero. Then we get a contradiction.

Next suppose $D_1\neq 0$ and $D_{k+2}\neq 0$. Let $z_{2}$ be a zero of $g$. Then from (\ref{ebm.8}), we have $(g^{(1)}(z_{2}))^k=a(1-a_0)/k!$. Consequently from (\ref{ebm2.1}), we get $a(1-a_0)D_1/k!=(b-a)D_{k+2}$ and so
\bea\label{ebm2.2} \lambda_0&=&-\frac{4(k^2+1)}{k^2(k+1)}\frac{(b-a)^2c^2}{a(1-a_0)}-\frac{(k^3-6k^2+5k-4)}{k(k+1)}(b-a)ca_{k-1}\\
&&+\frac{k^3-2k^2+3k-2}{(k+1)}a(1-a_0)a_{k-1}^2+\frac{2(b-a)a_{k-1}}{k}-k(k+1)a(1-a_0)a_{k-2}.\nonumber\eea
Therefore from (\ref{ebm.17}) and (\ref{ebm2.2}), we have
\bea\label{ebm2.3} \lambda_1\lambda_2&=&\frac{4(k^2+1)}{k^4(k+1)^2}\frac{(b-a)^2c^2}{a^2(1-a_0)^2}+\frac{k^3-6k^2+5k-4}{k^2(k+1)^2}\frac{(b-a)c}{a(1-a_0)}a_{k-1}\nonumber\\
&&-\frac{k^3-2k^2+3k-2}{k^2(k+1)^2}a_{k-1}^2-\frac{2(b-a)a_{k-1}}{k^3(k+1)a(1-a_0)}+\frac{a_{k-2}}{k}.\eea

Proceeding in the same way as done above, we get the same values of $\lambda_1$ and $\lambda_2$ as given by (\ref{e3.37}). In this case  $k\geq 2$. Putting the values of $\lambda_1$ and $\lambda_2$ from (\ref{e3.37}) into (\ref{ebm2.3}), we get
\bea\label{e3.38ss}&&4\left(\frac{m(m-k)}{(2m-k)^2}-k^2-1\right)(b-a)^2c^2-k\left(\frac{8m(m-k)}{(2m-k)^2}+k^3-6k^2+5k-4\right)a(b-a)a_{k-1}c\nonumber\\&&+k^2\left(4k\frac{m(m-k)}{(2m-k)^2}+k^3-2k^2+3k-2\right)a^2(1-a_0)a_{k-1}^2
+2k(k+1)a(b-a)a_{k-1}\nonumber\\&&-k^3(k+1)^2 a^2(1-a_0)^2a_{k-2}=0.\eea

\smallskip
If $(a_{k-2}, a_{k-1})=(0,0)$, then from (\ref{e3.38s}), (\ref{e3.38}) and (\ref{e3.38ss}), we get a contradiction, since $m-k<0$ and $k\geq 2$. So $(a_{k-2},a_{k-1})\neq (0,0)$. Therefore
\[f(z)=\left(A_2\exp \lambda_1 z+B_2\exp \lambda_2 z \right)^k+a,\]
where $A_2, B_2\in\mathbb{C}\backslash \{0\}$ satisfy the relation given by (\ref{aaa.1}), $\lambda_1, \lambda_2$ are given by (\ref{e3.37}) and $\varphi=c$ satisfies one of the equations given by (\ref{e3.38s}), (\ref{e3.38}) and (\ref{e3.38ss}).\par

\smallskip
{\bf Case 2.} Let $\varphi\equiv 0$. Since $f^{(1)}\not\equiv 0$, it follows that $\mathscr{L}_k(f)\equiv f$, i.e.,
\bea\label{k1} f^{(k)}+a_{k-1}f^{(k-1)}+\ldots+a_1f^{(1)}+(a_0-1)f\equiv 0.\eea

Now by Lemma \ref{l2.4}, we have $\rho(f)\leq 1$. If $(a_0,a_1, \ldots, a_{k-1})=(0,0,\ldots,0)$, then the conclusions of Theorem \ref{t1} follow from Lemma \ref{l2.5}. Next suppose $(a_0, a_1,\ldots, a_{k-1})\neq (0,0,\ldots,0)$. Now differentiating (\ref{k1}), we have 
\bea\label{k3} f^{(k+1)}+a_{k-1}f^{(k)}+\ldots+a_1f^{(2)}+(a_0-1)f^{(1)}\equiv 0.\eea

\smallskip
First suppose $a_0=1$. Since all the zeros of $f-a$ have multiplicity at least $k$, from (\ref{k1}), we get that $f-a$ has no zeros. So $f(z)=A\exp \lambda z+a$, where $A, \lambda\in\mathbb{C}\backslash \{0\}$. Also from (\ref{k1}), we get $\lambda^{k-1}+a_{k-1}\lambda^{k-2}+\ldots+a_1=0$.

\smallskip
Next suppose $a_0\neq 1$. Now we consider following sub-cases.\par

\smallskip
{\bf Sub-case 2.1.} Let $a=0$. In this case also $f$ has no zeros and so 
\[f(z)=A\exp \lambda z,\]
where $A, \lambda\in\mathbb{C}\backslash \{0\}$ such that $\lambda^k+a_{k-1}\lambda^{k-1}+\ldots+a_1\lambda+a_0=1$.\par

\smallskip
{\bf Sub-case 2.2.} Let $a\neq 0$. Now from (\ref{k1}), we see that all the zeros of $f-a$ have multiplicity exactly $k$. If $a$ is a Picard exceptional value of $f$, then 
\[f(z)=A\exp \lambda z+a,\]
where $A, \lambda\in\mathbb{C}\backslash \{0\}$ such that $\lambda^k+a_{k-1}\lambda^{k-1}+\ldots+(a_0-1)=0$. Henceforth we assume that $a$ is not a Picard exceptional value of $f$. Therefore we assume that $f-a=g^k$, where $g$ has only simple zeros and $\rho(g)=1$.
Using (\ref{ebm.2})-(\ref{ebm.2a}) to (\ref{k1}), we get
\bea\label{k2}k!(g^{(1)})^k+R_{10}(g)+(a_0-1)(g^k+a)\equiv 0,\eea
where $R_{10}(g)$ is a differential polynomial. Again using (\ref{ebm.2})-(\ref{ebm.3}) to (\ref{k3}), we get
\bea\label{k4}k!\left(k(k+1)g^{(2)}/2+a_{k-1}g^{(1)}\right)(g^{(1)})^{k-1}+R_{11}(g)\equiv 0,\eea
where $R_{11}(g)$ is a differential polynomial. Let $z_2$ be a zero of $g$. Clearly $g^{(1)}(z_2)\neq 0$. Therefore from (\ref{k2}), we have
\bea\label{k5} (g^{(1)}(z_2))^k=a(1-a_0)/k!.\eea

Now (\ref{k4}) yields $k(k+1)g^{(2)}(z_2)+2a_{k-1}g^{(1)}(z_2)=0$ and so 
\[g=0\Rightarrow k(k+1)g^{(2)}+2a_{k-1}g^{(1)}=0.\]

Let 
\bea\label{k6} H_3=(k(k+1)g^{(2)}+2a_{k-1}g^{(1)})/g.\eea

Also from (\ref{k1}), we see that
\bea\label{k7}\sideset{}{_{i=0}^k}{\sum}a_i(g^k)^{(i)}=a(1-a_0)+g^k.\eea

Now we consider following sub-cases.\par

\smallskip
{\bf Sub-case 2.2.1.} Let $H_3\equiv 0$. Then $k(k+1)g^{(2)}+2a_{k-1}g^{(1)}\equiv 0$. Note that $g^{(2)}\not\equiv 0$. If $a_{k-1}=0$, then we get a contradiction. Hence $a_{k-1}\neq 0$. On integration, we get
\bea\label{k8} g^{(1)}(z)=d_0\exp c_0 z\;\;\text{and}\;\;g(z)=A_0\exp c_0 z+B_0,\eea 
where $d_0\in\mathbb{C}\backslash \{0\}$, $c_0=-2a_{k-1}/k(k+1)$, $A_0=d_0/c_0$ and $B_0\in\mathbb{C}\backslash \{0\}$. If $z_2$ is a zero of $g$, then by routine calculations, we get from (\ref{k5}) and (\ref{k8}) that $(-B_0)^k=a(1-a_0)/k!c_0^k$.
Now from (\ref{ebmm.5a}) and (\ref{k7}), we get
\beas &&A^k_0\left(\sideset{}{_{i=0}^{k}}{\sum}a_i(kc_0)^i-1\right)\exp kc_0z\\&&+\;{}^{k}C_{1}A_0^{k-1}B_0\left(\sideset{}{_{i=0}^{k}}{\sum}a_i((k-1)c_0)^i-1\right)\exp (k-1)c_0z+\cdots\\&&+\;{}^{k}C_{k-1}A_0B_0^{k-1}\left(\sideset{}{_{i=0}^{k}}{\sum}a_ic_0^i-1\right)\exp c_0z=(1-a_0)(B_0^k-a)\nonumber\eeas
and so by Lemma \ref{l2}, we can obtain $B_0^k=a$ and $\sideset{}{_{i=0}^{k}}{\sum}a_ic_0^i=1$. Therefore
\[f(z)=(c_0\exp \lambda z+c_1)^k+a,\]
where $a,(a_0-1), c_0\in\mathbb{C}\backslash \{0\}$, $a_0,a_1,\ldots, a_{k-1})\neq (0,\ldots,0)$, $c_1^k=a$ and $\varphi=0$ such that $\lambda=-2a_{k-1}/k(k+1)$ and $\sideset{}{_{i=0}^{k}}{\sum}a_ic_0^i=1$.\par

\smallskip
{\bf Sub-case 2.2.2.} Let $H_3\not\equiv 0$. In this case, using Lemma \ref{l1} to (\ref{k6}), one can easily conclude that $H_3=\lambda_0\in\mathbb{C}\backslash \{0\}$. Now from (\ref{k6}), we get
\bea\label{k10} g^{(2)}=\check{a}_1g^{(1)}+\check{b}_1g,\;\;
\text{where}\;\; \check{a}_1=c_0\;\;\text{and}\;\;\check{b}_1=2\lambda_0/k(k+1)\neq 0.\eea

Differentiating (\ref{k10}) and using it repeatedly, we have $g^{(i)}=\check{a}_{i-1}g^{(1)}+\check{b}_{i-1}g$,
where $i\geq 2$ and $\check{a}_{i-1}, \check{b}_{i-1}\in\mathbb{C}$. Let $\lambda_1, \lambda_2$ be the roots of $m^2-\check{a}_1m-\check {b}_1=0$. Then
\bea\label{k13} \lambda_1+\lambda_2=c_0\;\;\text{and}\;\;\lambda_1 \lambda_2=-2\lambda_0/k(k+1).\eea

Consequently Eq. (\ref{k10}) has one of the following solutions:
\begin{enumerate}
\item[(1)] $g(z)=(A_1 z+B_1)\exp \lambda_1z$, where $A_1\in\mathbb{C}\backslash \{0\}$, $B_1\in\mathbb{C}$, if $\lambda_1=\lambda_2$;

\smallskip
\item[(2)] $g(z)=A_2\exp \lambda_1 z+B_2\exp \lambda_2 z$, where
where $A_2, B_2\in\mathbb{C}\backslash \{0\}$, if $\lambda_1\neq \lambda_2$.
\end{enumerate}

If $g(z)=(A_1 z+B_1)\exp \lambda_1z$, then proceeding same as done in Sub-case 1.2.2.1, we get a contradiction. Hence 
\[g(z)=A_2\exp \lambda_1 z+B_2\exp \lambda_2 z,\]
where $A_2$ and $B_2$ satisfy the relation given by (\ref{aaa.1}).
Now from (\ref{ebm.20}) and (\ref{k10}), we see that
\bea\label{k14} \hat{c}_0(g^{(1)})^{k+1}+(\hat{c}_1+a_{k-1}k!)g(g^{(1)})^k+\ldots+=a(1-a_0).\eea

If $d=\lambda_1-\lambda_2$, then from (\ref{ebm.22}), (\ref{ebm.23}) and (\ref{k14}), we get 
\[Q_{k}\left(\exp dz\right)\exp k\lambda_2 z=a(1-a_0).\]

Therefore proceeding in the same way as done in Sub-case 1.2.2.2.1, we have
$\lambda_1/\lambda_2=(m-k)/m$, where $1\leq m\leq k-1$, which demands that $k\geq 2$ and so from (\ref{k13}), we get
\[\lambda_1=-2(m-k)a_{k-1}/k(k+1)(2m-k)\;\;\text{and}\;\;\lambda_2=-2ma_{k-1}/k(k+1)(2m-k),\]
where $1\leq m\leq k-1$. Therefore
\[f(z)=\left(A_2\exp \lambda_1 z+B_2\exp \lambda_2 z \right)^k+a,\]
where $A_2, B_2\in\mathbb{C}\backslash \{0\}$ and $\lambda_1, \lambda_2$ are given above.

This completes the proof.
\end{proof}

\section{\bf{Statements and declarations}}
\vspace{1.3mm}

\noindent \textbf {Conflict of interest:} The authors declare that there are no conflicts of interest regarding the publication of this paper.\vspace{1.5mm}

\noindent{\bf Funding:} There is no funding received from any organizations for this research work.
\vspace{1.5mm}

\noindent \textbf {Data availability statement:}  Data sharing is not applicable to this article as no database were generated or analyzed during the current study.
\vspace{1.5mm}

\end{document}